\documentclass[11pt]{article}

\usepackage[utf8]{inputenc}
\usepackage[T1]{fontenc}
\usepackage{amsmath,amssymb,amsthm}
\usepackage{mathtools}
\usepackage{bm}
\usepackage[margin=1in]{geometry}
\usepackage{enumitem}
\usepackage{hyperref}

\newtheorem{theorem}{Theorem}[section]
\newtheorem{lemma}[theorem]{Lemma}
\newtheorem{proposition}[theorem]{Proposition}
\newtheorem{corollary}[theorem]{Corollary}
\theoremstyle{definition}
\newtheorem{remark}[theorem]{Remark}
\newtheorem{example}[theorem]{Example}

\newcommand{\tN}{\widetilde{N}}
\newcommand{\tM}{\widetilde{M}}
\newcommand{\tg}{\widetilde{g}}
\newcommand{\tnabla}{\widetilde{\nabla}}
\newcommand{\ctnabla}{\mathring{\widetilde{\nabla}}}
\newcommand{\tR}{\widetilde{R}}
\newcommand{\ctR}{\mathring{\widetilde{R}}}
\newcommand{\cnabla}{\mathring{\nabla}}
\newcommand{\cR}{\mathring{R}}
\newcommand{\tT}{\widetilde{T}}
\newcommand{\tX}{\widetilde{X}}
\newcommand{\tY}{\widetilde{Y}}
\newcommand{\chr}{\chi}
\newcommand{\Ric}{\operatorname{Ric}}
\newcommand{\tr}{\operatorname{tr}}
\newcommand{\sgn}{\operatorname{sgn}}
\newcommand{\HH}{\mathcal{H}}
\newcommand{\Id}{\operatorname{Id}}
\newcommand{\Frob}{\mathrm{F}}

\begin{document}

\title{Hineva Inequality for Submanifolds of Real Space Forms with Semi-Symmetric Non-Metric Connection 	}
\author{Mohamd Saleem Lone$^{1}$ \and Mehraj Ahmad Lone$^{2}$}
\date{%
$^{1}$Department of Mathematics, University of Kashmir, 190006, India\\
Email: \texttt{msaleemlone@uok.edu.in}\\[2pt]
$^{2}$Department of Mathematics, National Institute of Technology, 190006, India\\
Email: \texttt{mehrajlone@nitsri.ac.in}}

\maketitle
 
\begin{abstract}
For a submanifold of an arbitrary Riemannian manifold carrying the Agashe–Chafle semi-symmetric non-metric connection, we prove a sharp Hineva-type lower bound for the Ricci curvature, derived from a precise Ricci identity in which the ambient geometry enters through a single curvature term. This generality lets the same result be applied uniformly to real, complex and Sasakian space forms. In particular, for real space forms we obtain a unified two-sided Ricci estimate whose width depends only on the extrinsic invariants. The estimate involves the mean curvature, the second fundamental form and the connection tensor, and its equality case is characterized completely at quasi-umbilical points. Explicit examples in real and complex ambient spaces realize both the equality and the strict cases.
\end{abstract}

\noindent\textbf{Keywords:} Hineva inequality; Ricci curvature; semi-symmetric non-metric connection; real space form; second fundamental form; quasi-umbilical point; slant submanifold; complex space form; Sasakian space form.

\medskip
\noindent\textbf{2020 Mathematics Subject Classification:} 53C40; 53C42; 53B05; 53C55; 53C25.

\section{Introduction}

One of the guiding problems of contemporary submanifold geometry is to understand how the internal curvature of a submanifold is tied to the external quantities that record how it is placed inside the ambient space. Nash's isometric embedding theorem \cite{Nash} guarantees that every Riemannian manifold may be viewed as a submanifold of a Euclidean space of sufficiently large dimension, which is exactly what makes such a programme meaningful: intrinsic curvature can, in principle, always be probed through extrinsic means. A recurring question of this type asks in what way the Ricci, scalar and sectional curvatures of the submanifold are constrained by the mean curvature vector and by the second fundamental form.

The first decisive result of this kind is due to Chen \cite{Chen}, who produced a best-possible upper bound for the Ricci curvature of a submanifold of a real space form in terms of the length of the mean curvature vector. Now known as the Chen--Ricci inequality, it opened up an active line of investigation. Hong and Tripathi \cite{HT1} carried Chen's inequality over to submanifolds of arbitrary Riemannian manifolds and treated the equality discussion with care. Refinements soon appeared in a variety of ambient settings: contact metric manifolds were handled by Tripathi \cite{Tripathi1}, Lagrangian submanifolds of complex space forms by Deng \cite{Deng} and by Oprea \cite{Oprea1,Oprea2}, and generalized Sasakian space forms by Hong and Tripathi \cite{HT2}. A form valid for curvature-like tensors, together with a sharpened variant, was obtained by Tripathi \cite{Tripathi2}. The survey \cite{ChenBlaga} records the more recent progress on Chen--Ricci-type estimates.

Whereas the Chen--Ricci inequality supplies an optimal ceiling for the Ricci curvature, matching floors are appreciably harder to reach. Lower estimates of this kind are closely tied to the way the principal curvatures are distributed and to the geometry of quasi-umbilical submanifolds.

In the same era, Hineva pursued curvature bounds for submanifolds independently. In \cite{Hineva1} she obtained two-sided control of the sectional curvature through the mean curvature vector and the squared length of the second fundamental form. In \cite{Hineva2} she announced, without supplying proofs, both an upper and a lower bound for the Ricci curvature of a submanifold of a Riemannian manifold. Her upper bound is Chen--Ricci in spirit, whereas the lower one, since named the Hineva inequality \cite{Verma1}, states that for an $n$-dimensional submanifold $(M,g)$ of a Riemannian manifold $(\tM,\tg)$ and any unit vector $X\in T^1_pM$,
\begin{equation}\label{eq:hineva-intro}
\Ric(X)\;\ge\;\Theta(X)+\frac{n-1}{n}\left(2n\|H\|^2-\|\sigma\|^2-(n-2)\,\|H\|\sqrt{\frac{n\left(\|\sigma\|^2-n\|H\|^2\right)}{n-1}}\right),
\end{equation}
where $\Theta(X)=\sum_{j=2}^{n}\ctR(X,e_j,e_j,X)$ is the partial Ricci trace of the ambient Levi-Civita curvature along $T_pM$, $H$ is the mean curvature vector, and $\sigma$ the second fundamental form of $M$ in $\tM$. A complete proof of \eqref{eq:hineva-intro}, along with a full description of when equality holds, was later supplied by Hineva \cite{Hineva3}, the outcome being that equality at a point $p$ is equivalent to $p$ being a quasi-umbilical point of the submanifold. Building on this, Verma, Mihai, Mihai and Tripathi \cite{Verma1} recovered both the Chen--Ricci and the Hineva inequalities inside the general Kulkarni--Nomizu framework for tensor fields obeying an algebraic Gauss equation, and applied them to several families of submanifolds in generalized Sasakian space forms, among them Sasakian \cite{ABC}, cosymplectic and Kenmotsu space forms. A companion paper \cite{Verma2} pushed the same circle of ideas to submanifolds of product generalized Sasakian space forms.

The bulk of the literature on Chen--Ricci and Hineva inequalities is set against the Levi-Civita connection. Over the last several years, however, submanifold geometry relative to more general affine connections, especially those carrying torsion or non-metricity, has drawn steady attention. One such object is the semi-symmetric non-metric connection introduced by Agashe and Chafle \cite{AC1}. For submanifolds equipped with such a connection, the Chen--Ricci (that is, the upper Ricci) bound was first proved by \"Ozg\"ur and Mihai \cite{OM1} in the real space form case, with a subsequent corrigendum \cite{OM2}, and was afterwards extended to K\"ahler manifolds by Mohammed et al.\ \cite{Moh1} and to trans-Sasakian manifolds by Mohammed et al.\ \cite{Moh2}. All of these furnish upper controls; the matching lower, Hineva-type, bounds have so far been left open.

Unlike the Levi-Civita case, the curvature tensor of a semi-symmetric non-metric connection typically violates the classical symmetries of a Riemannian curvature tensor. As a result several familiar curvature identities cease to hold, and the derivation of intrinsic--extrinsic inequalities becomes markedly more delicate. We stress that this breakdown of symmetry is precisely what pushes our situation beyond the Kulkarni--Nomizu setting of \cite{Verma1}. There the Hineva inequality is established for a curvature-like tensor satisfying an algebraic Gauss equation, and the proof leans on the full pairing symmetry of that tensor. The curvature tensor \eqref{eq:curvrel} attached to a Riemannian manifold with a semi-symmetric non-metric connection is not curvature-like: its non-constant piece $s(X,Z)\tg(Y,W)-s(Y,Z)\tg(X,W)$ is antisymmetric in $(X,Y)$ but not in $(Z,W)$, so it fails to be a Kulkarni--Nomizu tensor.

The layout of the paper is as follows. Section \ref{sec:prelim} assembles the background on semi-symmetric non-metric connections, their curvature, and the submanifold equations adapted to this framework, and records the key algebraic lemma. Section \ref{sec:main} contains the main theorem: we state and prove the Hineva inequality for a general Riemannian ambient and analyse the equality case in detail. Section \ref{sec:app} specialises the inequality to real, complex and Sasakian space forms, and in the real case adds the Chen--Ricci upper bound and the two-sided estimate. Section \ref{sec:examples} presents explicit examples realising both the equality and the strict cases.

\section{Preliminaries}\label{sec:prelim}

Let $(\tN^{\,n+p},\tg)$ be a Riemannian manifold of dimension $n+p$ carrying a linear connection $\tnabla$. The torsion tensor $\tT$ of $\tnabla$ is
\begin{equation}\label{eq:torsion}
\tT(\tX,\tY)=\tnabla_{\tX}\tY-\tnabla_{\tY}\tX-[\tX,\tY],
\end{equation}
for all vector fields $\tX,\tY$ on $\tN$. One calls $\tnabla$ \emph{semi-symmetric} when its torsion takes the form
\begin{equation}\label{eq:ssdef}
\tT(\tX,\tY)=\phi(\tY)\tX-\phi(\tX)\tY
\end{equation}
for some $1$-form $\phi$ on $\tN$. The connection is \emph{metric} if $\tnabla\tg=0$ and \emph{non-metric} otherwise; a connection that is at once semi-symmetric and non-metric is a \emph{semi-symmetric non-metric connection} (SSNMC).

Following Agashe and Chafle \cite{AC1}, such a connection is expressed through the Levi-Civita connection $\ctnabla$ of $\tg$ by
\begin{equation}\label{eq:connform}
\tnabla_{\tX}\tY=\ctnabla_{\tX}\tY+\phi(\tY)\tX,
\end{equation}
for all vector fields $\tX,\tY$ on $\tN$, where $\phi$ is a $1$-form on $\tN$. The vector field $P$ dual to $\phi$ is fixed by $\tg(P,\tX)=\phi(\tX)$ for every $\tX\in\chr(\tN)$.

Write $\tR$ and $\ctR$ for the curvature tensors of $\tN$ associated with $\tnabla$ and $\ctnabla$, respectively. By \cite{AC1} the two are related by
\begin{equation}\label{eq:curvrel}
\tR(X,Y,Z,W)=\ctR(X,Y,Z,W)+s(X,Z)\,\tg(Y,W)-s(Y,Z)\,\tg(X,W),
\end{equation}
for all vector fields $X,Y,Z,W$ on $\tN$, where the $(0,2)$-tensor $s$ is
\begin{equation}\label{eq:sdef}
s(X,Y)=\bigl(\ctnabla_{X}\phi\bigr)Y-\phi(X)\phi(Y),\qquad X,Y\in\chr(\tN).
\end{equation}
Identity \eqref{eq:curvrel} is valid for any ambient Levi-Civita curvature $\ctR$; in particular $\tN$ need not be a space form. If $\tN=\tN^{\,n+p}(c)$ is a real space form of constant sectional curvature $c$, so that $\ctR(X,Y,Z,W)=c\{\tg(X,W)\tg(Y,Z)-\tg(X,Z)\tg(Y,W)\}$, then \eqref{eq:curvrel} becomes
\begin{equation}\label{eq:curvrel-sf}
\tR(X,Y,Z,W)=c\{\tg(X,W)\tg(Y,Z)-\tg(X,Z)\tg(Y,W)\}+s(X,Z)\,\tg(Y,W)-s(Y,Z)\,\tg(X,W).
\end{equation}

Let $(M^n,g)$ be an $n$-dimensional Riemannian submanifold of $\tN^{\,n+p}$ endowed with the semi-symmetric non-metric connection $\tnabla$. Denote by $\nabla$ the connection induced on $M^n$ by $\tnabla$, and by $\cnabla$ the Levi-Civita connection of $(M^n,g)$. The two Gauss formulas, relative to $\tnabla$ and $\ctnabla$, read respectively
\begin{align}
\tnabla_{X}Y&=\nabla_{X}Y+h(X,Y),&&X,Y\in\chr(M^n),\label{eq:gauss1}\\
\ctnabla_{X}Y&=\cnabla_{X}Y+\mathring h(X,Y),&&X,Y\in\chr(M^n),\label{eq:gauss2}
\end{align}
where $\mathring h$ is the second fundamental form of $M^n$ in $\tN^{\,n+p}$ with respect to the Levi-Civita connection and $h$ the analogous $(0,2)$-tensor for the semi-symmetric non-metric connection. As shown in \cite{AC2}, the two coincide,
\begin{equation}\label{eq:hh}
h=\mathring h.
\end{equation}
In view of \eqref{eq:hh} we denote the common second fundamental form simply by $\sigma$ and record the unified Gauss formula
\begin{equation}\label{eq:gauss-unified}
\tnabla_{X}Y=\nabla_{X}Y+\sigma(X,Y),\qquad X,Y\in\chr(M^n).
\end{equation}
The mean curvature vector of $M^n$ in $\tN^{\,n+p}$ is
\begin{equation}\label{eq:meancurv}
H=\frac1n\tr\sigma=\frac1n\sum_{i=1}^{n}\sigma(e_i,e_i),
\end{equation}
and the squared norm of the second fundamental form is
\begin{equation}\label{eq:signorm}
\|\sigma\|^2=\sum_{\alpha=n+1}^{n+p}\sum_{i,j=1}^{n}\bigl(\sigma^\alpha_{ij}\bigr)^2,
\end{equation}
with $\sigma^\alpha_{ij}=\tg(\sigma(e_i,e_j),e_\alpha)$ for an orthonormal frame $\{e_1,\dots,e_n\}$ of $TM^n$ and an orthonormal frame $\{e_{n+1},\dots,e_{n+p}\}$ of the normal bundle $T^\perp M^n$. We also set $H^\alpha=\frac1n\sum_{i=1}^{n}\sigma^\alpha_{ii}$, so that $\|H\|^2=\sum_\alpha (H^\alpha)^2$.

For a submanifold $M^n$ with orthonormal tangent frame $\{e_1,\dots,e_n\}$, let $\lambda$ denote the partial trace of the ambient tensor $s$ over $T_xM^n$, namely $\lambda=\sum_{i=1}^{n}s(e_i,e_i)$; this quantity is frame-independent and depends only on the tangent space.

The vector field $P$ splits uniquely into tangential and normal parts along $M^n$,
\begin{equation}\label{eq:Pdecomp}
P=P^\top+P^\perp,
\end{equation}
with $P^\top\in TM^n$ and $P^\perp\in T^\perp M^n$, and we abbreviate $\phi(H)=\tg(P^\perp,H)$.

The Gauss equation for $M^n$ in $\tN^{\,n+p}$ relative to the semi-symmetric non-metric connection $\tnabla$ is \cite{AC2}
\begin{equation}\label{eq:gausseq}
\begin{aligned}
\tR(X,Y,Z,W)={}&R(X,Y,Z,W)+\tg(\sigma(X,Z),\sigma(Y,W))-\tg(\sigma(Y,Z),\sigma(X,W))\\
&+\tg(P^\perp,\sigma(Y,Z))\,\tg(X,W)-\tg(P^\perp,\sigma(X,Z))\,\tg(Y,W),
\end{aligned}
\end{equation}
where $R$ is the curvature tensor of $M^n$ for the induced connection $\nabla$. Equation \eqref{eq:gausseq} is a direct consequence of the connection formula \eqref{eq:connform}: for a normal field $\xi$, combining \eqref{eq:connform} with the Levi-Civita Weingarten formula gives $\tnabla_X\xi=-A_\xi X+\nabla^\perp_X\xi+\phi(\xi)X$, and projecting $\tR(X,Y)Z$ onto $TM^n$ produces \eqref{eq:gausseq} with the two $P^\perp$-terms carrying the signs shown.

Because $\nabla$ is non-metric, its curvature $R$ is in general not antisymmetric in the last pair, $R(X,Y,Z,W)\neq-R(X,Y,W,Z)$, and it lacks the pairing symmetry $R(X,Y,Z,W)=R(Z,W,X,Y)$. In particular $R(e_i,e_j,e_j,e_i)$ and $R(e_j,e_i,e_i,e_j)$ need not agree, so the sectional curvature is undefined until a convention is chosen; this well-documented subtlety is precisely what the corrigendum \cite{OM2} to \cite{OM1} addresses. Once and for all we set
\begin{equation}\label{eq:convention}
K(e_i\wedge e_j):=R(e_i,e_j,e_j,e_i),\qquad \Ric(X):=\sum_{j=2}^{n}R(X,e_j,e_j,X)\quad(X=e_1),
\end{equation}
and everything in Section \ref{sec:main} and its applications is to be read with respect to this choice.

A submanifold $M^n$ is \emph{totally geodesic} when $\sigma\equiv0$ and \emph{minimal} when $H=0$. A point $x\in M^n$ is \emph{umbilical} if the shape operator satisfies $A_N=\rho(N)\,\Id$ for every normal $N\in T^\perp_xM^n$ and some scalar $\rho(N)$; $M^n$ is \emph{totally umbilical} when every point is umbilical. Finally, $x\in M^n$ is \emph{quasi-umbilical} with respect to a normal direction $e_\alpha$ if the matrix $(\sigma^\alpha_{ij})$ has exactly two distinct eigenvalues $\lambda_\alpha$ and $\mu_\alpha$, of multiplicities $1$ and $n-1$, respectively \cite{Hineva3}.

Throughout the paper, $\tR$ and $R$ denote the curvature tensors of the semi-symmetric non-metric connections $\tnabla$ on $\tN^{\,n+p}$ and $\nabla$ on $M^n$, whereas $\ctR$ and $\cR$ denote those of the corresponding Levi-Civita connections $\ctnabla$ and $\cnabla$. Further material on semi-symmetric non-metric connections may be found in \cite{DK,SDB}.

The following lemma of Hineva \cite{Hineva3} is the main algebraic device behind the lower bound for the Ricci curvature.

\begin{lemma}[\cite{Hineva3}]\label{lem:hineva}
Let $A=(a_{ij})$ be a symmetric $n\times n$ matrix $(n\ge2)$ with trace $\tr(A)=a$ and Frobenius norm $\|A\|_{\Frob}=b$. Then
\begin{equation}\label{eq:lemmaineq}
a_{11}\sum_{i=1}^{n}a_{ii}-\sum_{i=1}^{n}(a_{1i})^2\;\ge\;\frac{n-1}{n^2}\left(2a^2-nb^2-(n-2)|a|\sqrt{\frac{nb^2-a^2}{n-1}}\right).
\end{equation}
Equality holds in \eqref{eq:lemmaineq} if and only if $A$ has the form
\begin{equation}\label{eq:matrixform}
A=\begin{pmatrix}
\lambda&0&\cdots&0&0\\
0&\mu&\cdots&0&0\\
\vdots&\vdots&\ddots&\vdots&\vdots\\
0&0&\cdots&\mu&0\\
0&0&\cdots&0&\mu
\end{pmatrix},
\end{equation}
where the two distinct eigenvalues are
\begin{equation}\label{eq:eigenvalues}
\lambda=\frac{a}{n}-\frac{n-1}{n}\,\sgn(a)\sqrt{\frac{nb^2-a^2}{n-1}},\qquad
\mu=\frac{a}{n}+\frac{1}{n}\,\sgn(a)\sqrt{\frac{nb^2-a^2}{n-1}},
\end{equation}
the distinguished eigenvalue $\lambda$ occupying the first (distinguished) position. The scalar case $\lambda=\mu=a/n$ is admitted: it arises when $nb^2-a^2=0$, i.e.\ when $A$ is a scalar matrix, and the equality configuration is then not quasi-umbilical in the strict two-distinct-eigenvalue sense. When $a=0$, both sign choices in \eqref{eq:eigenvalues} return the same minimal value and either branch is a minimiser.
\end{lemma}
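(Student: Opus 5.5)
The plan is to minimise the left-hand side $f(A)=a_{11}\,a-\sum_{i}(a_{1i})^2$ directly over symmetric matrices with prescribed trace $a$ and Frobenius norm $b$. This is a reduction to a one-variable problem, in three steps.

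\textbf{Step 1: isolate the parameters.} Put $x=a_{11}$ and $t=\sum_{i=2}^{n}(a_{1i})^2\ge 0$. Then
\[
f=ax-x^2-t .
\]
Let $B=(a_{ij})_{i,j\ge2}$ be the lower $(n-1)\times(n-1)$ block. Its trace is $a-x$. Counting each off-diagonal entry of the first row and column twice gives
\[
b^2=x^2+2t+\|B\|_{\Frob}^2 .
\]

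\textbf{Step 2: bound $t$ by Cauchy--Schwarz.} Cauchy--Schwarz on the diagonal of $B$ gives $\|B\|_{\Frob}^2\ge (a-x)^2/(n-1)$. Equality holds iff $B=\mu I_{n-1}$ with $\mu=(a-x)/(n-1)$. Hence
\[
t\le \tfrac12\Bigl(b^2-x^2-\tfrac{(a-x)^2}{n-1}\Bigr),
\qquad
f\ge g(x):=ax-x^2-\tfrac12\Bigl(b^2-x^2-\tfrac{(a-x)^2}{n-1}\Bigr).
\]
Since $t\ge0$, $x$ is confined to the interval $I$ where $x^2+(a-x)^2/(n-1)\le b^2$. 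Solving this quadratic inequality, the endpoints of $I$ are
\[
\frac{a}{n}\pm\frac{n-1}{n}\sqrt{\frac{nb^2-a^2}{n-1}} .
\]
These are precisely the two candidate values for $\lambda$ in \eqref{eq:eigenvalues}.

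\textbf{Step 3: minimise the concave function $g$.} The coefficient of $x^2$ in $g$ is $-\tfrac{n-2}{2(n-1)}\le0$, so $g$ is concave on $I$ and attains its minimum at an endpoint. At either endpoint the bracket in $g$ vanishes, so $g=x(a-x)$ there. Comparing the two endpoint values, the smaller one is the endpoint with the sign $-\sgn(a)$ in front of the square root. Substituting it and simplifying should reproduce exactly the right-hand side of \eqref{eq:lemmaineq}; the $|a|$ arises from $a\cdot\sgn(a)$. When $a=0$ the two endpoint values coincide, which matches the remark in the statement.

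\textbf{Equality case.} Equality forces three things. First, $t=0$, so the first row and column are off-diagonal-free. Second, $B=\mu I_{n-1}$. Third, $x$ is the minimising endpoint, so $x=\lambda$; then $\mu=(a-\lambda)/(n-1)$, which agrees with \eqref{eq:eigenvalues}. This yields the form \eqref{eq:matrixform}. The degenerate case $nb^2=a^2$ collapses $I$ to the single point $a/n$, and $A$ is scalar. Conversely, the matrix \eqref{eq:matrixform} with these $\lambda,\mu$ attains the bound by direct substitution.

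\textbf{Main obstacle: the endpoint algebra and the case $n=2$.} The hardest step is verifying that $x(a-x)$ at the chosen endpoint equals the stated expression, together with the sign bookkeeping that turns $a\,\sgn(a)$ into $|a|$. The other delicate point is $n=2$. There $g$ is constant on $I$, since $f=\det A=(a^2-b^2)/2$ for every admissible $A$. So for $n=2$ the inequality holds with equality for all such $A$, and the uniqueness part of the equality characterisation only holds up to rotation of the frame. I would state this explicitly and prove the equality characterisation for $n\ge3$, where strict concavity makes the minimiser unique (or two-valued exactly when $a=0$).
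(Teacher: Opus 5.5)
Your argument is correct. It takes a different route from the paper only in the sense that the paper gives no proof of Lemma \ref{lem:hineva}: it imports the statement, including the equality characterisation, from \cite{Hineva3}.

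Your reduction yields a complete, elementary proof. You fix $x=a_{11}$, absorb the first-row mass $t$ and the block $B$ into the Frobenius constraint, apply Cauchy--Schwarz to $B$, and minimise the concave quadratic $g$ at an endpoint of the admissible interval.

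The step you flag as the main obstacle does go through. Set $r=\sqrt{(nb^2-a^2)/(n-1)}$ and $x_\epsilon=\frac an+\epsilon\frac{n-1}{n}r$ with $\epsilon=\pm1$. Using $(n-1)r^2=nb^2-a^2$, one gets $x_\epsilon(a-x_\epsilon)=\frac{n-1}{n^2}\bigl(2a^2-nb^2+\epsilon(n-2)ar\bigr)$. This is minimised by $\epsilon=-\sgn(a)$ and gives exactly the right side of \eqref{eq:lemmaineq}. The resulting $\mu=(a-\lambda)/(n-1)$ agrees with \eqref{eq:eigenvalues}. In the equality analysis, $t=0$ follows automatically once $B$ is scalar and $x$ sits at an endpoint, since the bracket in $g$ vanishes there.

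Your self-contained route exposes two imprecisions that the citation hides.

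\textbf{The case $n=2$.} Here $g\equiv\frac12(a^2-b^2)$, so every symmetric matrix attains equality. For example, $\bigl(\begin{smallmatrix}1&1\\1&1\end{smallmatrix}\bigr)$ has $a=2$ and $b^2=4$, and both sides vanish. So in the lemma's fixed frame the ``only if'' direction is simply false for $n=2$. Your phrase ``unique up to rotation'' is accurate only in the weak sense that any symmetric $2\times2$ matrix can be diagonalised. Restricting the characterisation to $n\ge3$, as you propose and as Remark \ref{rem:n2lemma} implicitly concedes, is necessary, not just cautious.

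\textbf{The case $a=0$.} The formula \eqref{eq:eigenvalues} must be read with $\sgn(a)$ replaced by either $\pm1$. With $\sgn(0)=0$ it would give $\lambda=\mu=0$, which has the wrong Frobenius norm unless $b=0$. Your endpoint comparison handles this correctly.
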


\begin{remark}\label{rem:n2lemma}
For $n=2$ the left side of \eqref{eq:lemmaineq} equals $a_{11}a_{22}-a_{12}^2=\det A=\tfrac12(a^2-b^2)$, which coincides identically with the right side, since the $(n-2)$ factor annihilates the radical. Hence the estimate is an identity for every $2\times2$ symmetric matrix, and no diagonal or quasi-umbilical hypothesis is needed. The substantive content of Lemma \ref{lem:hineva} therefore lies in the range $n\ge3$.
\end{remark}

\section{Main Result}\label{sec:main}
For a unit $X=e_1\in T^1_xM^n$ put
\begin{equation}\label{eq:Theta}
\Theta(X):=\sum_{j=2}^{n}\ctR(e_1,e_j,e_j,e_1),
\end{equation}
the partial Ricci trace of the ambient Levi-Civita curvature.

\begin{theorem}\label{thm:main}
Let $M^n$ $(n\ge3)$ be a submanifold of an $(n+p)$-dimensional Riemannian manifold $\tN^{\,n+p}$ carrying an Agashe--Chafle semi-symmetric non-metric connection $\tnabla$. Then, for every unit $X\in T^1_xM^n$ and with the convention \eqref{eq:convention},
\begin{equation}\label{eq:mainineq}
\Ric(X)\;\ge\;\Theta(X)-\lambda+s(X,X)-n\,\phi(H)+\tg\bigl(P^\perp,\sigma(X,X)\bigr)+\HH(H,\sigma),
\end{equation}
where $\lambda$ is the partial trace of $s$ over $T_xM^n$, $\phi(H)=\tg(P^\perp,H)$, and $\Theta(X)$ is given by \eqref{eq:Theta}.

Equality is attained at $x\in M^n$ if and only if both of the following hold:
\begin{enumerate}[label=(\roman*)]
\item for each $\alpha\in\{n+1,\dots,n+p\}$, the matrix $A_\alpha$ has the quasi-umbilical form
\begin{equation}\label{eq:quasiform}
A_\alpha=\operatorname{diag}(\lambda_\alpha,\mu_\alpha,\dots,\mu_\alpha)
\end{equation}
in a frame $\{e_1=X,e_2,\dots,e_n\}$, with eigenvalues at the Lemma \ref{lem:hineva} minimiser for $a=nH^\alpha$, $b=\|\sigma^\alpha\|_{\Frob}$:
\begin{equation}\label{eq:eigenvals3}
\lambda_\alpha=H^\alpha-\frac{n-1}{n}\,\varepsilon_\alpha\sqrt{\frac{n\|\sigma^\alpha\|^2-n^2(H^\alpha)^2}{n-1}},\qquad
\mu_\alpha=H^\alpha+\frac{1}{n}\,\varepsilon_\alpha\sqrt{\frac{n\|\sigma^\alpha\|^2-n^2(H^\alpha)^2}{n-1}},
\end{equation}
where $\varepsilon_\alpha=\sgn(H^\alpha)$ when $H^\alpha\neq0$, and $\varepsilon_\alpha=\pm1$ (either choice) when $H^\alpha=0$;
\item writing $D_\alpha:=|\lambda_\alpha-\mu_\alpha|=\sqrt{\dfrac{n\bigl(\|\sigma^\alpha\|^2-n(H^\alpha)^2\bigr)}{n-1}}$, if $H\neq0$ there exists $\rho\ge0$ with
\begin{equation}\label{eq:proportion}
D_\alpha=\rho\,|H^\alpha|\qquad\text{for all }\alpha.
\end{equation}
Thus, when $H\neq0$, any normal component with $H^\alpha=0$ must satisfy $D_\alpha=0$, i.e.\ $\sigma^\alpha$ is a scalar matrix. 
\end{enumerate}
\end{theorem}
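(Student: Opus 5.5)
The plan is to derive an exact Ricci identity from the Gauss equation \eqref{eq:gausseq}, apply Lemma \ref{lem:hineva} separately in each normal direction, and then combine the normal directions by Cauchy--Schwarz. Throughout I read $\HH(H,\sigma)$ as the Hineva expression
\[
\HH(H,\sigma)=\frac{n-1}{n}\Bigl(2n\|H\|^2-\|\sigma\|^2-(n-2)\|H\|\sqrt{\tfrac{n(\|\sigma\|^2-n\|H\|^2)}{n-1}}\Bigr),
\]
as in \eqref{eq:hineva-intro}.

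\emph{Step 1: the exact Ricci identity.} Fix an orthonormal frame with $e_1=X$. In \eqref{eq:gausseq} put $(X,Y,Z,W)=(e_1,e_j,e_j,e_1)$ for $j\ge2$.
\begin{itemize}
\item The term $-\tg(P^\perp,\sigma(e_1,e_j))\tg(e_j,e_1)$ vanishes.
\item Solving for the curvature of $M^n$ gives $R(e_1,e_j,e_j,e_1)=\tR(e_1,e_j,e_j,e_1)+\tg(\sigma_{11},\sigma_{jj})-\|\sigma_{1j}\|^2-\tg(P^\perp,\sigma_{jj})$.
\item By \eqref{eq:curvrel}, $\tR(e_1,e_j,e_j,e_1)=\ctR(e_1,e_j,e_j,e_1)-s(e_j,e_j)$.
\end{itemize}
Summing over $j=2,\dots,n$, and noting that the $j=1$ terms cancel in both the $\sigma$-part and the $P^\perp$-part, yields
\[
\Ric(X)=\Theta(X)-\lambda+s(X,X)-n\phi(H)+\tg(P^\perp,\sigma(X,X))+\sum_{\alpha}\Bigl(\sigma^\alpha_{11}\sum_{i}\sigma^\alpha_{ii}-\sum_i(\sigma^\alpha_{1i})^2\Bigr).
\]
This is an identity. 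Hence \eqref{eq:mainineq} reduces to a lower bound on the last sum by $\HH(H,\sigma)$, and the failure of the pairing symmetry of $R$ plays no further role once convention \eqref{eq:convention} is fixed.

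\emph{Step 2: the per-direction bound.} For each $\alpha$, apply Lemma \ref{lem:hineva} to $A_\alpha=(\sigma^\alpha_{ij})$ with $a=nH^\alpha$ and $b=\|\sigma^\alpha\|$. Then $\sqrt{(nb^2-a^2)/(n-1)}=D_\alpha$, and the $\alpha$-th summand is bounded below by
\[
\frac{n-1}{n}\Bigl(2n(H^\alpha)^2-\|\sigma^\alpha\|^2-(n-2)|H^\alpha|D_\alpha\Bigr).
\]
Summing over $\alpha$ produces everything in $\HH(H,\sigma)$ except the cross term, which appears as $\sum_\alpha|H^\alpha|D_\alpha$.

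\emph{Step 3: aggregating the cross term.} By Cauchy--Schwarz in $\mathbb R^p$,
\[
\sum_\alpha|H^\alpha|D_\alpha\le\|H\|\Bigl(\sum_\alpha D_\alpha^2\Bigr)^{1/2}=\|H\|\sqrt{\tfrac{n(\|\sigma\|^2-n\|H\|^2)}{n-1}}.
\]
This term carries the coefficient $-(n-2)\le0$, so the inequality runs in the correct direction and \eqref{eq:mainineq} follows.

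\emph{Step 4: the equality case (the main obstacle).} Equality in \eqref{eq:mainineq} holds exactly when equality holds in Lemma \ref{lem:hineva} for every $\alpha$ and in Cauchy--Schwarz.
\begin{itemize}
\item By the Lemma, the first condition says each $A_\alpha$ is $\operatorname{diag}(\lambda_\alpha,\mu_\alpha,\dots,\mu_\alpha)$ with the eigenvalues \eqref{eq:eigenvals3}; the sign is $\sgn(H^\alpha)$, and either sign is allowed when $H^\alpha=0$. This is condition (i).
\item The Cauchy--Schwarz condition is that the vectors $(|H^\alpha|)_\alpha$ and $(D_\alpha)_\alpha$ are parallel. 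When $H\neq0$ this means $D_\alpha=\rho|H^\alpha|$ for some $\rho\ge0$, which is condition (ii). When $H=0$ the cross term vanishes and no condition arises.
\end{itemize}
The delicate point is that a single frame must diagonalise all $A_\alpha$ simultaneously. The Lemma only requires, for each $\alpha$, that $e_1$ be an eigenvector of $A_\alpha$ and that $A_\alpha$ restricted to $e_1^\perp$ be scalar. A scalar block is diagonal in any orthonormal basis of $e_1^\perp$, so any completion $\{e_2,\dots,e_n\}$ works for all $\alpha$ at once. For the converse, I substitute the diagonal forms and check directly that both inequalities become equalities. This includes the case $n\|\sigma^\alpha\|^2=n^2(H^\alpha)^2$, where $A_\alpha$ is scalar and the Lemma's degenerate case applies.
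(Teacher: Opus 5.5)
Your proposal is correct and follows the paper's proof step for step. It uses the same exact Ricci identity obtained from \eqref{eq:gausseq} and \eqref{eq:curvrel}, Lemma \ref{lem:hineva} in each normal direction, Cauchy--Schwarz on $(|H^\alpha|)_\alpha$ and $(D_\alpha)_\alpha$, and reads off equality from equality in each step. Your observation that the scalar lower blocks make any completion of $e_1$ a common diagonalising frame is a useful clarification that the paper leaves implicit; you should also say explicitly that requiring equality in Cauchy--Schwarz uses $n\ge3$, because only then is the coefficient $-\tfrac{(n-1)(n-2)}{n}$ of the cross term strictly negative.
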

\begin{proof}
Setting $X=W=e_i$, $Y=Z=e_j$ $(i\neq j)$ in \eqref{eq:gausseq} and invoking \eqref{eq:curvrel}, valid for any ambient $\ctR$, gives
\begin{equation}\label{eq:sect-id}
K(e_i\wedge e_j)=\ctR(e_i,e_j,e_j,e_i)-s(e_j,e_j)+\sum_\alpha\bigl[\sigma^\alpha_{ii}\sigma^\alpha_{jj}-(\sigma^\alpha_{ij})^2\bigr]-\tg(P^\perp,\sigma(e_j,e_j)).
\end{equation}
 Summing over $j=2,\dots,n$ with $i=1$ yields the exact identity
\begin{equation}\label{eq:ricci-id}
\Ric(X)=\Theta(X)-\lambda+s(X,X)-n\,\phi(H)+\tg(P^\perp,\sigma(X,X))+\sum_\alpha\left(\sigma^\alpha_{11}\sum_{j\ge2}\sigma^\alpha_{jj}-\sum_{j\ge2}(\sigma^\alpha_{1j})^2\right).
\end{equation}
Applying Lemma \ref{lem:hineva} to each $(\sigma^\alpha_{ij})$,
\begin{equation}\label{eq:peralpha}
\sigma^\alpha_{11}\sum_{j\ge2}\sigma^\alpha_{jj}-\sum_{j\ge2}(\sigma^\alpha_{1j})^2\;\ge\;\frac{n-1}{n}\left[2n(H^\alpha)^2-\|\sigma^\alpha\|^2-(n-2)|H^\alpha|\sqrt{\frac{n\bigl(\|\sigma^\alpha\|^2-n(H^\alpha)^2\bigr)}{n-1}}\right].
\end{equation}
Summing over $\alpha$ and using the Cauchy--Schwarz inequality,
\begin{equation}\label{eq:cs}
\sum_\alpha|H^\alpha|\sqrt{\frac{n\bigl(\|\sigma^\alpha\|^2-n(H^\alpha)^2\bigr)}{n-1}}\;\le\;\|H\|\sqrt{\frac{n\bigl(\|\sigma\|^2-n\|H\|^2\bigr)}{n-1}},
\end{equation}
we obtain
\[
\sum_\alpha\left(\sigma^\alpha_{11}\sum_{j\ge2}\sigma^\alpha_{jj}-\sum_{j\ge2}(\sigma^\alpha_{1j})^2\right)\;\ge\;\HH(H,\sigma),
\]
where
\begin{equation}\label{eq:Hcal}
\HH(H,\sigma)=\frac{n-1}{n}\left(2n\|H\|^2-\|\sigma\|^2-(n-2)\|H\|\sqrt{\frac{n\bigl(\|\sigma\|^2-n\|H\|^2\bigr)}{n-1}}\right).
\end{equation}
Substituting into \eqref{eq:ricci-id} gives \eqref{eq:mainineq}.

\emph{Equality case.} Equality holds in \eqref{eq:mainineq} exactly when equality holds simultaneously in the per-$\alpha$ bound \eqref{eq:peralpha} for every $\alpha$ and in the Cauchy--Schwarz inequality \eqref{eq:cs}.

Since $n\ge3$, the Hineva estimate \eqref{eq:peralpha} is strict for generic matrices. By Lemma \ref{lem:hineva}, equality for each $\alpha$ forces $(\sigma^\alpha_{ij})$ to be diagonal in the frame $\{e_1=X,e_2,\dots,e_n\}$, with the quasi-umbilical form $\operatorname{diag}(\lambda_\alpha,\mu_\alpha,\dots,\mu_\alpha)$ at the minimising eigenvalues \eqref{eq:eigenvalues}. When $H^\alpha\neq0$ the minimising branch is selected by $\varepsilon_\alpha=\sgn(H^\alpha)$; when $H^\alpha=0$, both branches return the same minimum and either choice $\varepsilon_\alpha=\pm1$ is valid. This is condition (i).

For the Cauchy--Schwarz step \eqref{eq:cs}, introduce the vectors $u=(|H^{n+1}|,\dots,|H^{n+p}|)$ and $v=(D^{n+1},\dots,D^{n+p})$ in $\mathbb{R}^p$, where $D_\alpha=|\lambda_\alpha-\mu_\alpha|$. Equality $\langle u,v\rangle=\|u\|\,\|v\|$ holds if and only if $v=\rho u$ for some $\rho\ge0$, i.e.\ $D_\alpha=\rho|H^\alpha|$ for all $\alpha$. This is condition (ii). When $H^\alpha=0$ for some $\alpha$, it forces $D_\alpha=0$, meaning $\sigma^\alpha$ is a scalar matrix, in accord with (i). When $H=0$, \eqref{eq:cs} reduces to $0\le0$ and equality holds for any $\rho$.
This completes the proof.
\end{proof}

\begin{corollary}[$P$ tangent]\label{cor:Ptan}
If $P^\perp=0$, then $\Ric(X)\ge\Theta(X)-\lambda+s(X,X)+\HH(H,\sigma)$, with equality as in Theorem \ref{thm:main}. The extrinsic term is optimal at the algebraic level.
\end{corollary}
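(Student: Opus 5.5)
The corollary follows from Theorem \ref{thm:main} by specialisation. I would first note that $P^\perp=0$ kills every term in which the normal part of $P$ is paired with a normal vector. Concretely, $\phi(H)=\tg(P^\perp,H)=0$ and $\tg\bigl(P^\perp,\sigma(X,X)\bigr)=0$. Substituting these into \eqref{eq:mainineq} gives
\[
\Ric(X)\ge\Theta(X)-\lambda+s(X,X)+\HH(H,\sigma).
\]
The same substitution in the exact Ricci identity \eqref{eq:ricci-id} shows that $\Ric(X)-\Theta(X)+\lambda-s(X,X)$ equals the extrinsic sum $\sum_\alpha\bigl(\sigma^\alpha_{11}\sum_{j\ge2}\sigma^\alpha_{jj}-\sum_{j\ge2}(\sigma^\alpha_{1j})^2\bigr)$.

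For the equality statement, I would observe that the equality analysis in the proof of Theorem \ref{thm:main} never used the $P^\perp$-terms. Those terms enter \eqref{eq:ricci-id} identically on both sides and play no role in the inequalities. Equality is therefore still equivalent to equality in \eqref{eq:peralpha} for every $\alpha$ together with equality in \eqref{eq:cs}. That is exactly conditions (i) and (ii) of the theorem.

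For optimality of the extrinsic term, the plan is to show that $\HH(H,\sigma)$ is the infimum of the extrinsic sum over all admissible second fundamental forms with prescribed $H$ and $\|\sigma\|$. Since the sum is purely algebraic in $\sigma$, it suffices to exhibit data attaining equality. I would fix the target values of $H$ and $\|\sigma\|$ and align $H$ with one normal direction, $H=\|H\|e_{n+1}$. I would then take $A_{n+1}=\operatorname{diag}(\lambda_{n+1},\mu_{n+1},\dots,\mu_{n+1})$ with the eigenvalues \eqref{eq:eigenvals3}, with $H^{n+1}=\|H\|$ and $\|\sigma^{n+1}\|=\|\sigma\|$, and set $A_\alpha=0$ for $\alpha>n+1$.

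Condition (i) then holds by Lemma \ref{lem:hineva}, using the minimising branch for $\alpha=n+1$ and the trivial branch for the zero matrices. Condition (ii) holds with $\rho=D_{n+1}/\|H\|$ when $H\neq0$, since every other component has $H^\alpha=0$ and $D_\alpha=0$. When $H=0$, (ii) is vacuous. Hence this choice of $\sigma$ attains the bound, so no larger function of $(\|H\|,\|\sigma\|)$ can replace $\HH$.

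The only step needing care is the phrase "at the algebraic level". I would state explicitly that realisability by an actual submanifold is not claimed here, since that is deferred to the examples of Section \ref{sec:examples}. I would also check that $\|\sigma\|^2\ge n\|H\|^2$ makes the radical in \eqref{eq:eigenvals3} real, which holds automatically.
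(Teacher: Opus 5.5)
Your proposal is correct and matches the paper's proof, which simply substitutes $\phi(H)=\tg(P^\perp,H)=0$ and $\tg(P^\perp,\sigma(X,X))=0$ into \eqref{eq:mainineq}. The paper gives no argument for the equality clause or for the claim that the extrinsic term is optimal at the algebraic level; your two additions correctly fill these points in: the $P^\perp$-terms play no role in the equality analysis, and the quasi-umbilical configuration concentrated in one normal direction ($H=\|H\|e_{n+1}$, $A_\alpha=0$ for $\alpha>n+1$) attains $\HH(H,\sigma)$.
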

\begin{proof}
Since $P^\perp=0$, both $\phi(H)=\tg(P^\perp,H)=0$ and $\tg(P^\perp,\sigma(X,X))=0$; substituting into \eqref{eq:mainineq} removes the last two connection-dependent terms.
\end{proof}
\begin{remark}\label{rem:n2main}
Inequality \eqref{eq:mainineq} remains valid for $n=2$. For $n=2$, however, the Hineva estimate is an identity (Remark \ref{rem:n2lemma}): the $(n-2)$ factor annihilates the radical and both sides of \eqref{eq:lemmaineq} reduce to $\det A$ for every symmetric $2\times2$ matrix. Consequently the equality case is weaker, no quasi-umbilical or diagonal restriction is imposed by the algebraic step, and equality in \eqref{eq:mainineq} reduces to equality in the Cauchy--Schwarz step \eqref{eq:cs} alone. The nontrivial equality mechanism, conditions (i)--(ii) above, is therefore confined to $n\ge3$.
\end{remark}
\begin{corollary}[$P$ normal]\label{cor:Pnor}
If $P^\top=0$, then $\Ric(X)\ge\Theta(X)-\lambda+s(X,X)-n\,\tg(P,H)+\tg(P,\sigma(X,X))+\HH(H,\sigma)$.
\end{corollary}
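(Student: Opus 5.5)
The plan is to specialise inequality \eqref{eq:mainineq} of Theorem \ref{thm:main} directly to the case $P^\top=0$, rather than re-running the Gauss-equation argument. Since the decomposition \eqref{eq:Pdecomp} is unique, the hypothesis $P^\top=0$ means $P=P^\perp$ along $M^n$, i.e.\ $P$ is normal to $M^n$ at every point.

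The first step is to rewrite the two connection-dependent terms of \eqref{eq:mainineq}. By definition $\phi(H)=\tg(P^\perp,H)$, and with $P^\perp=P$ this becomes $\tg(P,H)$. Likewise $\tg(P^\perp,\sigma(X,X))=\tg(P,\sigma(X,X))$. No identity beyond $P=P^\perp$ is needed here, because $H$ and $\sigma(X,X)$ are themselves normal vectors. (Even if $P^\top$ were nonzero, $\tg(P,H)=\tg(P^\perp,H)$ would still hold for the same reason; so the corollary is really a reformulation in terms of the ambient field $P$.)

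The second step is to substitute these expressions into \eqref{eq:mainineq}. This yields
\[
\Ric(X)\ge\Theta(X)-\lambda+s(X,X)-n\,\tg(P,H)+\tg(P,\sigma(X,X))+\HH(H,\sigma).
\]
Here $\lambda$, $s(X,X)$ and $\Theta(X)$ are untouched, and $\HH(H,\sigma)$ is as in \eqref{eq:Hcal}.

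The equality characterisation carries over verbatim from Theorem \ref{thm:main}, since the argument only renames terms in an exact identity. There is no genuine obstacle. The only point needing care is the observation that the pairings with $H$ and $\sigma(X,X)$ see only the normal part of $P$.
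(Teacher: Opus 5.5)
Your proposal is correct and matches the paper's proof, which likewise substitutes $P=P^\perp$ into the two connection-dependent terms $\phi(H)$ and $\tg(P^\perp,\sigma(X,X))$ of \eqref{eq:mainineq}. Your side remark is also right: because $H$ and $\sigma(X,X)$ are normal, $\tg(P,H)=\tg(P^\perp,H)$ and $\tg(P,\sigma(X,X))=\tg(P^\perp,\sigma(X,X))$ hold even without $P^\top=0$, so the hypothesis is not actually needed for this rewriting.
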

\begin{proof}
Since $P^\top=0$ we have $P=P^\perp$, so $\phi(H)=\tg(P^\perp,H)=\tg(P,H)$ and $\tg(P^\perp,\sigma(X,X))=\tg(P,\sigma(X,X))$; substituting into \eqref{eq:mainineq} gives the result.
\end{proof}

\begin{corollary}[$P$ at angle $\theta$]\label{cor:Pangle}
With $\|P^\perp\|=\|P\|\sin\theta$ and $\gamma$ the angle between $P^\perp$ and $H$,
\begin{equation}\label{eq:Pangle}
\Ric(X)\ge\Theta(X)-\lambda+s(X,X)-n\|P\|\sin\theta\,\|H\|\cos\gamma+\tg(P^\perp,\sigma(X,X))+\HH(H,\sigma).
\end{equation}
\end{corollary}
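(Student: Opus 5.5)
The plan is to derive Corollary \ref{cor:Pangle} directly from Theorem \ref{thm:main}, by rewriting the single term $-n\,\phi(H)$ in \eqref{eq:mainineq} in polar form. All other terms on the right-hand side of \eqref{eq:mainineq} are left unchanged. This includes $\tg(P^\perp,\sigma(X,X))$, which the statement keeps in its original form.

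First I would fix the meaning of $\theta$. Decompose $P=P^\top+P^\perp$ as in \eqref{eq:Pdecomp}. Since the two summands are $\tg$-orthogonal, $\|P\|^2=\|P^\top\|^2+\|P^\perp\|^2$. Hence there is an angle $\theta\in[0,\pi/2]$ with $\|P^\top\|=\|P\|\cos\theta$ and $\|P^\perp\|=\|P\|\sin\theta$; $\theta$ is the angle between $P$ and $T_xM^n$. When $P\neq0$ this $\theta$ is unique; when $P=0$ any $\theta$ will do.

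Second, define $\gamma$ as the angle between $P^\perp$ and $H$ in the normal space $T^\perp_xM^n$. Then
\[
\phi(H)=\tg(P^\perp,H)=\|P^\perp\|\,\|H\|\cos\gamma=\|P\|\sin\theta\,\|H\|\cos\gamma .
\]
In the degenerate cases $P^\perp=0$ or $H=0$, both sides vanish for any choice of $\gamma$, so the identity holds with $\gamma$ arbitrary.

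Finally, substitute this expression for $\phi(H)$ into \eqref{eq:mainineq}. This gives \eqref{eq:Pangle} exactly. Because the substitution is an identity, the equality characterization of Theorem \ref{thm:main} carries over verbatim.

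There is no real obstacle here. The only points that need care are the conventions for $\theta$ and $\gamma$ in the degenerate cases $P=0$, $P^\perp=0$ and $H=0$, and these are handled by the remarks above.
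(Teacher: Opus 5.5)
Your proposal is correct and matches the paper's proof: both rewrite $\phi(H)=\tg(P^\perp,H)=\|P^\perp\|\,\|H\|\cos\gamma=\|P\|\sin\theta\,\|H\|\cos\gamma$ and substitute this into \eqref{eq:mainineq}. Your added care with the degenerate cases $P=0$, $P^\perp=0$, $H=0$, and your remark that the equality characterization carries over, are harmless refinements the paper leaves implicit.
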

\begin{proof}
Decompose $\phi(H)=\tg(P^\perp,H)=\|P^\perp\|\,\|H\|\cos\gamma=\|P\|\sin\theta\,\|H\|\cos\gamma$ and substitute into \eqref{eq:mainineq}.
\end{proof}

\begin{remark}\label{rem:phizero}
When $\phi\equiv0$, \eqref{eq:mainineq} collapses to $\Ric(X)\ge\Theta(X)+\HH(H,\sigma)$, the classical Hineva inequality; for $\Theta(X)=(n-1)c$ this recovers \cite{Hineva3,Verma1}.
\end{remark}

\section{Applications to space forms}\label{sec:app}

\subsection{Real space forms}

We now specialise Theorem \ref{thm:main} to the classical setting of a real space form $\tN^{\,n+p}(c)$ of constant sectional curvature $c$. Here $\ctR(e_i,e_j,e_j,e_i)=c$ for every pair $i\neq j$, so $\Theta(X)=(n-1)c$, and the exact identity \eqref{eq:ricci-id} becomes
\begin{equation}\label{eq:rsf-id}
\Ric(X)=(n-1)c-\lambda+s(X,X)-n\,\phi(H)+\tg\bigl(P^\perp,\sigma(X,X)\bigr)+\sum_{\alpha=n+1}^{n+p}\left(\sigma^\alpha_{11}\sum_{j=2}^{n}\sigma^\alpha_{jj}-\sum_{j=2}^{n}(\sigma^\alpha_{1j})^2\right).
\end{equation}

\begin{theorem}[Hineva inequality for real space forms]\label{thm:rsf}
Let $M^n$ $(n\ge2)$ be a submanifold of a real space form $\tN^{\,n+p}(c)$ endowed with the semi-symmetric non-metric connection $\tnabla$. Then, for every unit $X\in T^1_xM^n$ and with the convention \eqref{eq:convention},
\begin{equation}\label{eq:rsf-ineq}
\Ric(X)\;\ge\;(n-1)c-\lambda+s(X,X)-n\,\phi(H)+\tg\bigl(P^\perp,\sigma(X,X)\bigr)+\HH(H,\sigma),
\end{equation}
with equality if and only if conditions (i)--(ii) of Theorem \ref{thm:main} hold (see also Remark \ref{rem:n2main} for $n=2$).
\end{theorem}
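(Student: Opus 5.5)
The plan is to obtain Theorem \ref{thm:rsf} as a direct specialisation of Theorem \ref{thm:main}, with the case $n=2$ treated separately.

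\textbf{Specialisation for $n\ge3$.} In a real space form the Levi-Civita curvature is
\[
\ctR(X,Y,Z,W)=c\{\tg(X,W)\tg(Y,Z)-\tg(X,Z)\tg(Y,W)\}.
\]
Putting $X=W=e_1$ and $Y=Z=e_j$ for $j\ge2$, and using orthonormality, each summand of \eqref{eq:Theta} equals $c$. Hence $\Theta(X)=(n-1)c$ for every unit $X$. Substituting this into \eqref{eq:mainineq} gives \eqref{eq:rsf-ineq}.

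The equality analysis of Theorem \ref{thm:main} carries over unchanged. The reason is that $\Theta(X)$, $\lambda$, $s(X,X)$ and the $P^\perp$-terms in the exact identity \eqref{eq:rsf-id} do not depend on the second fundamental form. Equality in \eqref{eq:rsf-ineq} is therefore equivalent to equality in the per-$\alpha$ Hineva bound \eqref{eq:peralpha} for every $\alpha$, together with equality in the Cauchy--Schwarz step \eqref{eq:cs}. This is exactly conditions (i)--(ii).

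\textbf{The case $n=2$.} Theorem \ref{thm:main} is stated only for $n\ge3$, so I would rerun its proof directly. The derivation of \eqref{eq:sect-id} and \eqref{eq:ricci-id} from the Gauss equation \eqref{eq:gausseq} and the curvature relation \eqref{eq:curvrel} uses no restriction on $n$, so \eqref{eq:rsf-id} holds for $n=2$.

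For each $\alpha$, the sum on the right of \eqref{eq:rsf-id} is $\det(\sigma^\alpha_{ij})$. By Remark \ref{rem:n2lemma}, this equals
\[
\tfrac12\bigl(4(H^\alpha)^2-\|\sigma^\alpha\|^2\bigr),
\]
which is the $n=2$ value of the right side of \eqref{eq:peralpha}, because the $(n-2)$ factor removes the radical. Summing over $\alpha$ gives exactly $\HH(H,\sigma)$, since at $n=2$ the Cauchy--Schwarz term in \eqref{eq:Hcal} is also multiplied by $n-2=0$. Thus for $n=2$ the inequality \eqref{eq:rsf-ineq} is an identity. Equality always holds, with no constraint, which is consistent with the description in Remark \ref{rem:n2main}.

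\textbf{Main obstacle.} The only delicate point is the bookkeeping of signs in passing from \eqref{eq:gausseq} and \eqref{eq:curvrel-sf} to \eqref{eq:rsf-id}. I would check the following three contributions carefully.
\begin{enumerate}[label=(\alph*)]
\item The $s$-terms of \eqref{eq:curvrel} with $X=W=e_1$, $Y=Z=e_j$ reduce to $s(e_1,e_j)\tg(e_j,e_1)-s(e_j,e_j)$, which is $-s(e_j,e_j)$. Summing over $j\ge2$ gives $-\lambda+s(X,X)$.
\item The $P^\perp$-terms of \eqref{eq:gausseq} reduce to $\tg(P^\perp,\sigma(e_j,e_j))$ once the term with $\tg(e_j,e_1)=0$ is dropped.
\item Moving these $P^\perp$-terms to the other side produces $-\tg(P^\perp,\sigma(e_j,e_j))$. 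Summing over $j\ge2$ gives $-\tg(P^\perp,nH-\sigma(X,X))$, which is $-n\phi(H)+\tg(P^\perp,\sigma(X,X))$.
\end{enumerate}
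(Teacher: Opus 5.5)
Your proposal is correct and follows the paper's argument: insert $\Theta(X)=(n-1)c$ into the exact identity \eqref{eq:ricci-id}, bound the quadratic sum by Lemma~\ref{lem:hineva} plus Cauchy--Schwarz, and track equality through those two steps. One wording fix: $-n\phi(H)+\tg(P^\perp,\sigma(X,X))$ does depend on $\sigma$, so these terms drop out of the equality analysis because they occur exactly on both sides and are never estimated, not because they are independent of the second fundamental form. Your separate handling of $n=2$, where \eqref{eq:rsf-ineq} is an identity, is more careful than the paper's proof, which applies the Lemma's equality characterization uniformly for $n\ge2$. It also shows that the \emph{if and only if (i)--(ii)} clause holds literally only for $n\ge3$, since at $n=2$ equality holds even when $X$ is not a principal direction.
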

\begin{proof}
Since $\tN^{\,n+p}(c)$ has constant sectional curvature $c$, its Levi-Civita curvature satisfies $\ctR(e_i,e_j,e_j,e_i)=c$ for every pair $i\neq j$. Summing over $j=2,\dots,n$ gives $\Theta(X)=\sum_{j=2}^{n}c=(n-1)c$, and the exact identity \eqref{eq:ricci-id} takes the form \eqref{eq:rsf-id}. It remains to bound the quadratic sum from below. For each fixed $\alpha\in\{n+1,\dots,n+p\}$, the matrix $(\sigma^\alpha_{ij})_{1\le i,j\le n}$ is symmetric with trace $\tr(\sigma^\alpha)=nH^\alpha$ and Frobenius norm $\|\sigma^\alpha\|_{\Frob}$. Lemma \ref{lem:hineva} applied to this matrix yields
\begin{equation}\label{eq:rsf-peralpha}
\sigma^\alpha_{11}\sum_{j=2}^{n}\sigma^\alpha_{jj}-\sum_{j=2}^{n}(\sigma^\alpha_{1j})^2\;\ge\;\frac{n-1}{n}\left[2n(H^\alpha)^2-\|\sigma^\alpha\|^2-(n-2)|H^\alpha|\sqrt{\frac{n\bigl(\|\sigma^\alpha\|^2-n(H^\alpha)^2\bigr)}{n-1}}\right].
\end{equation}
Summing \eqref{eq:rsf-peralpha} over all $\alpha$, the first two terms on the right add up directly: $\sum_\alpha2n(H^\alpha)^2=2n\|H\|^2$ and $\sum_\alpha\|\sigma^\alpha\|^2=\|\sigma\|^2$. For the cross term, Cauchy--Schwarz gives
\begin{equation}\label{eq:rsf-cs}
\sum_\alpha|H^\alpha|\sqrt{\frac{n\bigl(\|\sigma^\alpha\|^2-n(H^\alpha)^2\bigr)}{n-1}}\le\sqrt{\sum_\alpha(H^\alpha)^2}\;\sqrt{\sum_\alpha\frac{n\bigl(\|\sigma^\alpha\|^2-n(H^\alpha)^2\bigr)}{n-1}}=\|H\|\sqrt{\frac{n\bigl(\|\sigma\|^2-n\|H\|^2\bigr)}{n-1}}.
\end{equation}
Collecting these estimates, the full quadratic sum satisfies $\sum_{\alpha=n+1}^{n+p}\bigl(\sigma^\alpha_{11}\sum_{j=2}^{n}\sigma^\alpha_{jj}-\sum_{j=2}^{n}(\sigma^\alpha_{1j})^2\bigr)\ge\HH(H,\sigma)$. Substituting this lower bound into \eqref{eq:rsf-id}, and noting that the terms $(n-1)c-\lambda+s(X,X)-n\,\phi(H)+\tg(P^\perp,\sigma(X,X))$ pass through unchanged since they do not enter the estimate, we obtain \eqref{eq:rsf-ineq}.

\emph{Equality.} Equality holds in \eqref{eq:rsf-ineq} if and only if it holds simultaneously in \eqref{eq:rsf-peralpha} for every $\alpha$ and in the Cauchy--Schwarz step above. By Lemma \ref{lem:hineva}, equality in \eqref{eq:rsf-peralpha} forces $(\sigma^\alpha_{ij})$ to have the diagonal form $\operatorname{diag}(\lambda_\alpha,\mu_\alpha,\dots,\mu_\alpha)$ with the minimising eigenvalues \eqref{eq:eigenvalues}, i.e.\ precisely \eqref{eq:quasiform}--\eqref{eq:eigenvals3}. Using $\|\sigma^\alpha\|^2-n(H^\alpha)^2=\frac{n-1}{n}(\lambda_\alpha-\mu_\alpha)^2$ and $nH^\alpha=\lambda_\alpha+(n-1)\mu_\alpha$, equality in Cauchy--Schwarz requires the ratio $|\lambda_\alpha-\mu_\alpha|/|\lambda_\alpha+(n-1)\mu_\alpha|$ to be constant in $\alpha$, which is condition (ii) of Theorem \ref{thm:main}. Since $\tg(P^\perp,\sigma(X,X))$ appears in \eqref{eq:rsf-id} as an exact identity and enters neither \eqref{eq:rsf-peralpha} nor the Cauchy--Schwarz step, it does not affect the equality analysis.
\end{proof}

\begin{corollary}\label{cor:rsf-classical}
When $\phi\equiv0$, inequality \eqref{eq:rsf-ineq} reduces to the classical Hineva inequality $\Ric(X)\ge(n-1)c+\HH(H,\sigma)$ for submanifolds of real space forms \cite{Hineva3}.
\end{corollary}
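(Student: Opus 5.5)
The statement to be proved is Corollary~\ref{cor:rsf-classical}. I would obtain it by direct specialisation of Theorem~\ref{thm:rsf}. When $\phi\equiv0$, the dual vector field $P$ vanishes identically, so $P^\top=P^\perp=0$. The first step is to check that every connection-dependent term in \eqref{eq:rsf-ineq} vanishes:
\begin{itemize}[label=\textbullet]
\item $\phi(H)=\tg(P^\perp,H)=0$;
\item $\tg(P^\perp,\sigma(X,X))=0$;
\item by \eqref{eq:sdef}, $s(X,Y)=(\ctnabla_X\phi)Y-\phi(X)\phi(Y)=0$, because $\ctnabla\phi=0$ when $\phi$ is identically zero. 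Hence $s(X,X)=0$, and the partial trace $\lambda=\sum_i s(e_i,e_i)=0$.
\end{itemize}
Substituting these into \eqref{eq:rsf-ineq} leaves $\Ric(X)\ge(n-1)c+\HH(H,\sigma)$, with $\HH$ as in \eqref{eq:Hcal}.

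The second step is to identify this with the classical statement. By \eqref{eq:connform}, $\phi\equiv0$ makes $\tnabla=\ctnabla$. Then $h=\mathring h$ holds trivially. The induced connection $\nabla$ equals the Levi-Civita connection $\cnabla$, so $R=\cR$, and $\Ric$ under convention \eqref{eq:convention} is the usual Ricci curvature $\sum_{j\ge2}\cR(X,e_j,e_j,X)$. The Gauss equation \eqref{eq:gausseq} reduces to the classical one. The resulting bound is therefore exactly \eqref{eq:hineva-intro} with $\Theta(X)=(n-1)c$, which is Hineva's inequality \cite{Hineva3}, consistent with Remark~\ref{rem:phizero}.

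There is no real obstacle here. The only point needing care is confirming that $s$ vanishes identically rather than just at a point. This is immediate, since the covariant derivative of the zero form is zero. The equality characterisation carries over unchanged, because the vanishing terms never entered the estimate.
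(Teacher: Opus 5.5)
Your proposal is correct and takes the same route as the paper, which treats the corollary as an immediate specialisation (cf.\ Remark~\ref{rem:phizero}): with $\phi\equiv0$ one has $P=0$ and $s\equiv0$, so $\lambda$, $s(X,X)$, $\phi(H)$ and $\tg(P^\perp,\sigma(X,X))$ all drop out of \eqref{eq:rsf-ineq}. Your further check that $\tnabla=\ctnabla$ forces $\nabla=\cnabla$ and $R=\cR$, so that $\Ric$ under \eqref{eq:convention} is the ordinary Ricci curvature, is a useful detail the paper leaves implicit; it also covers the technicality that $\phi\equiv0$ gives a metric connection, formally outside the SSNMC hypothesis, even though every formula used remains valid.
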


The same identity \eqref{eq:rsf-id} also produces a sharp upper bound, obtained by estimating the quadratic sum from above instead.

\begin{proposition}[Chen--Ricci inequality for real space forms]\label{prop:chenricci}
Under the hypotheses of Theorem \ref{thm:rsf},
\begin{equation}\label{eq:chenricci}
\Ric(X)\;\le\;(n-1)c-\lambda+s(X,X)-n\,\phi(H)+\tg\bigl(P^\perp,\sigma(X,X)\bigr)+\frac{n^2}{4}\|H\|^2.
\end{equation}
Equality holds if and only if, for every $\alpha$ in the frame $\{e_1=X,e_2,\dots,e_n\}$, $\sigma^\alpha_{11}=\tfrac n2 H^\alpha$ and $\sigma^\alpha_{1j}=0$ for all $j\ge2$.
\end{proposition}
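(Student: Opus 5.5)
Starting from the exact identity \eqref{eq:rsf-id}, it suffices to bound the quadratic sum from above. All the connection-dependent terms $(n-1)c-\lambda+s(X,X)-n\,\phi(H)+\tg(P^\perp,\sigma(X,X))$ pass through unchanged, exactly as in the proof of Theorem \ref{thm:rsf}. So the task is to show, for each fixed $\alpha$,
\[
\sigma^\alpha_{11}\sum_{j=2}^{n}\sigma^\alpha_{jj}-\sum_{j=2}^{n}(\sigma^\alpha_{1j})^2\;\le\;\frac{n^2}{4}(H^\alpha)^2 ,
\]
and then to sum over $\alpha$, using $\sum_\alpha (H^\alpha)^2=\|H\|^2$. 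Unlike the lower bound, no Cauchy--Schwarz step across normal directions is needed, since the right-hand side is already additive in $\alpha$.

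For the per-$\alpha$ estimate, write $a=\sigma^\alpha_{11}$ and $b=\sum_{j\ge2}\sigma^\alpha_{jj}$, so that $a+b=nH^\alpha$. Drop the nonpositive term $-\sum_{j\ge2}(\sigma^\alpha_{1j})^2$ and use the elementary inequality $ab\le\tfrac14(a+b)^2$, which is equivalent to $(a-b)^2\ge0$. This gives
\[
\sigma^\alpha_{11}\sum_{j\ge2}\sigma^\alpha_{jj}-\sum_{j\ge2}(\sigma^\alpha_{1j})^2\le ab\le \tfrac14 n^2(H^\alpha)^2 .
\]
Summing over $\alpha$ and substituting into \eqref{eq:rsf-id} yields \eqref{eq:chenricci}.

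For the equality case, equality in the summed inequality forces equality in each per-$\alpha$ inequality, because every per-$\alpha$ difference is nonnegative. Equality in the first step holds iff $\sigma^\alpha_{1j}=0$ for all $j\ge2$. Equality in the second step holds iff $a=b$, i.e.\ $\sigma^\alpha_{11}=\tfrac12 nH^\alpha$. Conversely, these two conditions make both steps equalities, so equality holds in \eqref{eq:chenricci}.

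There is no real obstacle. The only point to watch is that the off-diagonal sum runs only over $j\ge2$ with $i=1$, which is exactly what the convention \eqref{eq:convention} and the derivation of \eqref{eq:sect-id} produce. Consequently the remaining entries $\sigma^\alpha_{jk}$ with $j,k\ge2$ are unconstrained in the equality case. The argument works for every $n\ge2$.
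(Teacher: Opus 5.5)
Your proof is correct and is the paper's argument: starting from \eqref{eq:rsf-id}, you discard the nonnegative sum $\sum_{j\ge2}(\sigma^\alpha_{1j})^2$, use $\sigma^\alpha_{11}(nH^\alpha-\sigma^\alpha_{11})\le\frac{n^2}{4}(H^\alpha)^2$ for each $\alpha$, sum over $\alpha$, and read the equality conditions off the two steps. You also state explicitly that equality in the summed bound forces equality for every $\alpha$, because each per-$\alpha$ slack is nonnegative; the paper leaves this point implicit.
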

\begin{proof}
From \eqref{eq:rsf-id}, for each $\alpha$ one has
\[
\sigma^\alpha_{11}\sum_{j=2}^{n}\sigma^\alpha_{jj}-\sum_{j=2}^{n}(\sigma^\alpha_{1j})^2\le\sigma^\alpha_{11}(nH^\alpha-\sigma^\alpha_{11})\le\frac{n^2(H^\alpha)^2}{4},
\]
where the first step discards the non-negative sum $\sum_{j\ge2}(\sigma^\alpha_{1j})^2$ and the second is the elementary inequality $x(a-x)\le a^2/4$. Summing over $\alpha$ gives $\sum_\alpha(\cdots)\le\frac{n^2}{4}\|H\|^2$; substituting into \eqref{eq:rsf-id} yields \eqref{eq:chenricci}. Equality in the first step requires $\sigma^\alpha_{1j}=0$ ($j\ge2$); in the second, $\sigma^\alpha_{11}=\tfrac n2 H^\alpha$.
\end{proof}

\begin{remark}\label{rem:chenricci-hist}
The Chen--Ricci inequality for submanifolds of real space forms with a semi-symmetric non-metric connection was first established by \"Ozg\"ur and Mihai \cite{OM1} (see also the corrigendum \cite{OM2}), and later extended to the Kulkarni--Nomizu setting by Verma, Mihai, Mihai and Tripathi \cite{Verma1,Verma2}. We emphasise that Proposition \ref{prop:chenricci} is derived from the same exact identity \eqref{eq:rsf-id} and under the same convention \eqref{eq:convention} as the Hineva inequality (Theorem \ref{thm:rsf}), which removes any need to reconcile conventions across distinct sources.
\end{remark}

Combining Theorem \ref{thm:rsf} and Proposition \ref{prop:chenricci} yields a two-sided Ricci estimate.

\begin{corollary}[Two-sided Ricci bound]\label{cor:twosided}
Under the hypotheses of Theorem \ref{thm:rsf}, write
\[
\Phi(X):=(n-1)c-\lambda+s(X,X)-n\,\phi(H)+\tg\bigl(P^\perp,\sigma(X,X)\bigr)
\]
for the combined ambient and SSNMC contribution. Then
\begin{equation}\label{eq:twosided}
\Phi(X)+\HH(H,\sigma)\;\le\;\Ric(X)\;\le\;\Phi(X)+\frac{n^2}{4}\|H\|^2.
\end{equation}
In particular, the width of the Ricci window is $\frac{n^2}{4}\|H\|^2-\HH(H,\sigma)$, which depends only on the extrinsic invariants $\|H\|$ and $\|\sigma\|$; the SSNMC contribution $\Phi(X)$ cancels.
\end{corollary}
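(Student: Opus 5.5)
The two-sided bound \eqref{eq:twosided} is just Theorem \ref{thm:rsf} and Proposition \ref{prop:chenricci} placed side by side, so the plan is mostly bookkeeping. I start from the exact identity \eqref{eq:rsf-id}. With the notation $\Phi(X)$ of the statement it reads
\[
\Ric(X)=\Phi(X)+Q(X),\qquad Q(X):=\sum_{\alpha=n+1}^{n+p}\Bigl(\sigma^\alpha_{11}\sum_{j=2}^{n}\sigma^\alpha_{jj}-\sum_{j=2}^{n}(\sigma^\alpha_{1j})^2\Bigr).
\]
All the SSNMC and ambient data, namely $c$, $\lambda$, $s(X,X)$, $\phi(H)$ and $\tg(P^\perp,\sigma(X,X))$, sit in $\Phi(X)$. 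The quadratic part $Q(X)$ depends only on the second fundamental form in the frame $\{e_1=X,e_2,\dots,e_n\}$.

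The lower bound $Q(X)\ge\HH(H,\sigma)$ is exactly what the proof of Theorem \ref{thm:rsf} establishes: Lemma \ref{lem:hineva} is applied to each $(\sigma^\alpha_{ij})$, and the cross terms are then summed using the Cauchy--Schwarz step \eqref{eq:rsf-cs}. The upper bound $Q(X)\le\frac{n^2}{4}\|H\|^2$ is the content of the proof of Proposition \ref{prop:chenricci}: drop $\sum_{j\ge2}(\sigma^\alpha_{1j})^2\ge0$, then use $x(a-x)\le a^2/4$ with $a=nH^\alpha$. Adding $\Phi(X)$ to both estimates gives \eqref{eq:twosided} directly. Both estimates hold for all $n\ge2$, so no case split is needed beyond what Theorem \ref{thm:rsf} already covers.

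For the width statement, subtract the two outer sides. The quantity $\Phi(X)$ appears identically on both, since it comes from the same identity \eqref{eq:rsf-id} under the same convention \eqref{eq:convention}. It therefore cancels, leaving $\frac{n^2}{4}\|H\|^2-\HH(H,\sigma)$. By \eqref{eq:Hcal} this is a function of $\|H\|$, $\|\sigma\|$ and $n$ alone.

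The only point needing care is the consistency of the window: the width should be nonnegative. This follows automatically, because both bounds sandwich the same real number $Q(X)$. One can also check it directly, writing $t=\sqrt{n(\|\sigma\|^2-n\|H\|^2)/(n-1)}\ge0$ and noting that $\frac{n^2}{4}\|H\|^2-\HH(H,\sigma)$ is a nonnegative quadratic in $(\|H\|,t)$. The sandwich argument is enough, so I would not do this computation explicitly.
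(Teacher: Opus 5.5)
Your proposal is correct and is exactly the paper's argument: the corollary is obtained by placing Theorem~\ref{thm:rsf} and Proposition~\ref{prop:chenricci} side by side. Both are bounds on the same quadratic sum in the identity \eqref{eq:rsf-id}, so $\Phi(X)$ cancels in the width. Your optional direct check also goes through, since with your $t$ one has $\frac{n^2}{4}\|H\|^2-\HH(H,\sigma)=\bigl(\frac{n-2}{2}\|H\|+\frac{n-1}{n}t\bigr)^2\ge0$.
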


\subsection{Complex space forms}

Let $\tN^{\,2m}(4c)$ be a complex space form of constant holomorphic sectional curvature $4c$, with K\"ahler structure $J$, so that its Levi-Civita curvature tensor is
\begin{align*}
    \ctR(X,Y,Z,W)&=c\bigl[\tg(X,W)\tg(Y,Z)-\tg(X,Z)\tg(Y,W)+\tg(JX,W)\tg(JY,Z)-\tg(JX,Z)\tg(JY,W)\\
    &-2\tg(JX,Y)\tg(JZ,W)\bigr].
\end{align*}
For $X\in T_xM^n$ decompose $JX=TX+FX$ into tangential and normal parts, and set $\|TX\|^2=\sum_{k=1}^{n}\tg(JX,e_k)^2$, where $\{e_1,\dots,e_n\}$ is an orthonormal frame of $T_xM^n$.

\begin{theorem}[Hineva inequality for complex space forms]\label{thm:csf}
Let $M^n$ $(n\ge2)$ be a submanifold of a complex space form $\tN^{\,2m}(4c)$ endowed with a semi-symmetric non-metric connection $\tnabla$. Then, for every unit $X\in T^1_xM^n$ and with the convention \eqref{eq:convention},
\begin{equation}\label{eq:csf-ineq}
\Ric(X)\;\ge\;(n-1)c+3c\|TX\|^2-\lambda+s(X,X)-n\,\phi(H)+\tg\bigl(P^\perp,\sigma(X,X)\bigr)+\HH(H,\sigma),
\end{equation}
with equality if and only if conditions (i)--(ii) of Theorem \ref{thm:main} hold (see also Remark \ref{rem:n2main} for $n=2$).
\end{theorem}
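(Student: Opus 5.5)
The plan is to apply Theorem \ref{thm:main} directly. Its exact identity \eqref{eq:ricci-id} and the lower bound \eqref{eq:mainineq} hold for any Riemannian ambient, and the ambient geometry enters only through $\Theta(X)$. So the whole proof reduces to computing $\Theta(X)=\sum_{j=2}^{n}\ctR(e_1,e_j,e_j,e_1)$ for the complex space form curvature tensor, with $e_1=X$ and $\{e_1,\dots,e_n\}$ an orthonormal frame of $T_xM^n$.

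For this computation, set $X=W=e_1$ and $Y=Z=e_j$ ($j\ge2$) in the displayed formula for $\ctR$. Term by term:
\begin{itemize}
\item the first bracket gives $\tg(e_1,e_1)\tg(e_j,e_j)-\tg(e_1,e_j)^2=1$;
\item the second pair gives $\tg(Je_1,e_1)\tg(Je_j,e_j)-\tg(Je_1,e_j)^2$, and the first product vanishes since $\tg(JV,V)=0$;
\item the last term gives $-2\tg(Je_1,e_j)\tg(Je_j,e_1)=+2\tg(Je_1,e_j)^2$, by skew-symmetry of $\tg(J\cdot,\cdot)$.
\end{itemize}
Hence $\ctR(e_1,e_j,e_j,e_1)=c\bigl[1+3\tg(Je_1,e_j)^2\bigr]$, up to the sign convention for $\ctR$, which I must fix so that the real-space-form part reproduces $+c$, as in \eqref{eq:curvrel-sf} and Theorem \ref{thm:rsf}.

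Summing over $j=2,\dots,n$, and noting that the missing $j=1$ term $\tg(Je_1,e_1)^2$ is zero, gives
\[
\Theta(X)=(n-1)c+3c\sum_{k=1}^{n}\tg(JX,e_k)^2=(n-1)c+3c\|TX\|^2,
\]
where the final equality is the definition of $\|TX\|^2$. Substituting this into \eqref{eq:mainineq} yields \eqref{eq:csf-ineq}. The case $n=2$ is covered by Remark \ref{rem:n2main}, since \eqref{eq:ricci-id} and the Lemma-plus-Cauchy--Schwarz step remain valid there.

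For the equality case, note that $\Theta(X)$ is an exact term in \eqref{eq:ricci-id} and plays no part in the estimates. Equality therefore holds precisely when equality holds in the per-$\alpha$ Hineva bound and in the Cauchy--Schwarz step. These are conditions (i)--(ii) of Theorem \ref{thm:main}, with the weaker $n=2$ situation described in Remark \ref{rem:n2main}.

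The main obstacle is bookkeeping of signs. The paper's convention is $K(e_i\wedge e_j)=R(e_i,e_j,e_j,e_i)$, while the displayed complex curvature formula has the form $c[\tg(X,W)\tg(Y,Z)-\dots]$. I need to check that this combination gives $+1$ in the $\tg$-term and that the $J$-terms combine to $+3c$ rather than $-c$ or $+c$. That rests on carefully applying $\tg(JU,V)=-\tg(U,JV)$ to the term $-2\tg(JX,Y)\tg(JZ,W)$ evaluated at $(e_1,e_j,e_j,e_1)$.
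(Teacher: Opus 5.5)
Your route is the paper's: compute $\ctR(e_1,e_j,e_j,e_1)=c\bigl(1+3\tg(Je_1,e_j)^2\bigr)$, sum to get $\Theta(X)=(n-1)c+3c\|TX\|^2$, and substitute into Theorem \ref{thm:main}. The equality case carries over because $\Theta(X)$ enters \eqref{eq:ricci-id} as an exact term.

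One slip needs fixing. In your second bullet, $-\tg(Je_1,e_j)\tg(Je_j,e_1)=+\tg(Je_1,e_j)^2$, by the same skew-symmetry you use in the third bullet. As written, your bullets sum only to $1+\tg(Je_1,e_j)^2$; the corrected $J$-terms give $1+2=3$ copies of $\tg(Je_1,e_j)^2$, which is where the $3$ comes from.

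There is also no sign convention left to fix. The displayed curvature formula already gives $+1$ from its first bracket, consistent with \eqref{eq:curvrel-sf}.
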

\begin{proof}
It suffices to evaluate $\Theta(X)$ in Theorem \ref{thm:main}. Setting $e_1=X$ and substituting $X=W=e_1$, $Y=Z=e_j$ $(j\ge2)$ into the ambient curvature expression, orthonormality of $\{e_1,\dots,e_n\}$ eliminates all inner products except $\tg(e_1,e_1)=\tg(e_j,e_j)=1$ and the $J$-term $\tg(Je_1,e_j)^2$, giving $\ctR(e_1,e_j,e_j,e_1)=c\bigl(1+3\tg(Je_1,e_j)^2\bigr)$. Summing over $j=2,\dots,n$ yields
\begin{equation}\label{eq:csf-theta}
\Theta(X)=\sum_{j=2}^{n}c\bigl(1+3\tg(Je_1,e_j)^2\bigr)=(n-1)c+3c\sum_{j=2}^{n}\tg(JX,e_j)^2=(n-1)c+3c\|TX\|^2,
\end{equation}
the last equality using $\tg(JX,e_1)=\tg(JX,X)=0$ (as $J$ is skew-symmetric with respect to $\tg$), so that the sum over $j=2,\dots,n$ equals the full sum $\sum_{k=1}^{n}\tg(JX,e_k)^2=\|TX\|^2$. Substituting \eqref{eq:csf-theta} into Theorem \ref{thm:main} gives \eqref{eq:csf-ineq}; the equality analysis is unchanged, since it depends only on the quadratic sum, which is independent of the ambient curvature.
\end{proof}

\begin{corollary}[invariant submanifolds]\label{cor:csf-inv}
Let $M^n$ be an invariant submanifold of $\tN^{\,2m}(4c)$ with SSNMC, so that $JX\in T_xM^n$ for every $X\in T_xM^n$. Then, for every unit $X\in T^1_xM^n$,
\begin{equation}\label{eq:csf-inv}
\Ric(X)\ge(n-1)c+3c-\lambda+s(X,X)-n\,\phi(H)+\tg\bigl(P^\perp,\sigma(X,X)\bigr)+\HH(H,\sigma),
\end{equation}
with equality as in Theorem \ref{thm:main}.
\end{corollary}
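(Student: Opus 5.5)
The plan is to derive Corollary \ref{cor:csf-inv} directly from Theorem \ref{thm:csf}, so the only work is to evaluate $\|TX\|^2$ for a unit vector $X$ tangent to an invariant submanifold. By hypothesis $JX\in T_xM^n$, so the normal part vanishes, $FX=0$, and $JX=TX$.

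Since $J$ is an isometry of $\tg$, we have $\|TX\|^2=\|JX\|^2=\tg(JX,JX)=\tg(X,X)=1$. To match the paper's definition $\|TX\|^2=\sum_{k=1}^{n}\tg(JX,e_k)^2$, expand the tangent vector $JX$ in the orthonormal frame $\{e_1,\dots,e_n\}$ of $T_xM^n$. Parseval's identity then shows that this sum equals $\|JX\|^2=1$, and no normal contribution is lost because $JX$ is tangent. The same computation is already visible in the proof of Theorem \ref{thm:csf}: there $\tg(JX,e_1)=0$, so the entire unit vector $JX$ lies in $\operatorname{span}\{e_2,\dots,e_n\}$ and $\Theta(X)=(n-1)c+3c$.

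Substituting $\|TX\|^2=1$ into \eqref{eq:csf-ineq} gives \eqref{eq:csf-inv}. The equality statement carries over unchanged, because, as noted in the proof of Theorem \ref{thm:csf}, equality depends only on the quadratic sum in \eqref{eq:ricci-id}, which does not involve $\Theta$. This argument has no real obstacle. The only point to check is that an invariant submanifold makes $J$ restrict to an orthogonal complex structure on $T_xM^n$, which forces $n$ to be even; that parity does not affect the estimate.
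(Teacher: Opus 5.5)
Your proposal is correct and matches the paper's proof: both observe that invariance gives $FX=0$ and $TX=JX$, so $\|TX\|^2=\tg(JX,JX)=\tg(X,X)=1$, and then substitute this into \eqref{eq:csf-ineq}. Your Parseval check against the frame definition of $\|TX\|^2$, the note that equality is unaffected because it depends only on the quadratic sum, and the parity remark are harmless elaborations.
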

\begin{proof}
Since $M^n$ is invariant, $JX$ is tangent for every tangent $X$. In particular $FX=0$ and $TX=JX$, so $\|TX\|^2=\tg(JX,JX)=\tg(X,X)=1$. Substituting $\|TX\|^2=1$ into \eqref{eq:csf-ineq} gives \eqref{eq:csf-inv}.
\end{proof}

\begin{corollary}[anti-invariant submanifolds]\label{cor:csf-anti}
Let $M^n$ be an anti-invariant (totally real) submanifold of $\tN^{\,2m}(4c)$ with SSNMC, so that $JX\in T^\perp_xM^n$ for every $X\in T_xM^n$. Then, for every unit $X\in T^1_xM^n$,
\begin{equation}\label{eq:csf-anti}
\Ric(X)\ge(n-1)c-\lambda+s(X,X)-n\,\phi(H)+\tg\bigl(P^\perp,\sigma(X,X)\bigr)+\HH(H,\sigma),
\end{equation}
with equality as in Theorem \ref{thm:main}. In particular, when $n=m$ the submanifold is Lagrangian and the same bound holds.
\end{corollary}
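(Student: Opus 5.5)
The plan is to follow the pattern of Corollary \ref{cor:csf-inv}: specialise Theorem \ref{thm:csf} by computing $\|TX\|^2$ under the anti-invariance hypothesis and substituting into \eqref{eq:csf-ineq}.

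\emph{Step 1.} Since $M^n$ is anti-invariant, $JX\in T^\perp_xM^n$ for every tangent $X$. Hence $TX=0$ and $FX=JX$. Equivalently, $\tg(JX,e_k)=0$ for every $k=1,\dots,n$, so
\[
\|TX\|^2=\sum_{k=1}^{n}\tg(JX,e_k)^2=0 .
\]

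\emph{Step 2.} With $\|TX\|^2=0$, the term $3c\|TX\|^2$ in \eqref{eq:csf-ineq} vanishes, and this is precisely \eqref{eq:csf-anti}. As a cross-check, the formula $\ctR(e_1,e_j,e_j,e_1)=c\bigl(1+3\tg(Je_1,e_j)^2\bigr)$ from the proof of Theorem \ref{thm:csf} gives $\Theta(X)=(n-1)c$ directly. So the ambient contribution is the same as for a real space form of curvature $c$.

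\emph{Step 3.} The equality characterisation is inherited verbatim. As noted at the end of the proof of Theorem \ref{thm:csf}, the equality analysis depends only on the quadratic sum in \eqref{eq:ricci-id}, which is untouched by the value of $\Theta(X)$. Therefore conditions (i)--(ii) of Theorem \ref{thm:main} remain necessary and sufficient, with Remark \ref{rem:n2main} covering $n=2$.

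\emph{Step 4.} For the Lagrangian remark, note that when $n=m$ the anti-invariance condition $J(T_xM)\subseteq T^\perp_xM$, together with $\dim T^\perp_xM=2m-n=n$ and the injectivity of $J$, forces $J(T_xM)=T^\perp_xM$. So $M$ is Lagrangian, and since Lagrangian is a special case of anti-invariant, the same bound applies.

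There is no real obstacle here. The only point needing care is confirming that the full tangential sum vanishes, which is immediate from the definition of anti-invariance.
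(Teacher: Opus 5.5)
Your proposal is correct and follows the paper's proof: anti-invariance gives $TX=0$, hence $\|TX\|^2=0$, and substituting this into \eqref{eq:csf-ineq} yields \eqref{eq:csf-anti}, with the equality case inherited from Theorem \ref{thm:main}. For the Lagrangian remark, your dimension count ($\dim T^\perp_xM^n=2m-n=n$ together with injectivity of $J$) justifies the claim slightly more explicitly than the paper does, but the argument is the same.
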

\begin{proof}
Since $M^n$ is anti-invariant, $JX$ is normal for every tangent $X$. Hence $TX=0$ and $\|TX\|^2=0$. Substituting into \eqref{eq:csf-ineq} removes the term $3c\|TX\|^2$ and gives \eqref{eq:csf-anti}. When $n=m$, the anti-invariant condition forces $M^n$ to be Lagrangian (half-dimensional and totally real), and the bound applies without change.
\end{proof}

\begin{corollary}[slant submanifolds]\label{cor:csf-slant}
Let $M^n$ be a $\vartheta$-slant submanifold of $\tN^{\,2m}(4c)$ with SSNMC, i.e.\ the Wirtinger angle between $JX$ and $T_xM^n$ is the constant $\vartheta\in[0,\tfrac\pi2]$ for every nonzero $X\in T_xM^n$ orthogonal to $\ker T$. Then, for every unit $X\in T^1_xM^n$,
\begin{equation}\label{eq:csf-slant}
\Ric(X)\ge(n-1)c+3c\cos^2\vartheta-\lambda+s(X,X)-n\,\phi(H)+\tg\bigl(P^\perp,\sigma(X,X)\bigr)+\HH(H,\sigma),
\end{equation}
with equality as in Theorem \ref{thm:main}. Corollaries \ref{cor:csf-inv} and \ref{cor:csf-anti} are recovered as the extreme cases $\vartheta=0$ and $\vartheta=\tfrac\pi2$.
\end{corollary}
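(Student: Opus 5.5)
The plan is to reduce Corollary~\ref{cor:csf-slant} to Theorem~\ref{thm:csf}. Everything except the $J$-dependent term in \eqref{eq:csf-ineq} is unchanged, so it is enough to show that for a $\vartheta$-slant submanifold $\|TX\|^2$ is controlled by $\cos^2\vartheta$. The extrinsic term $\HH(H,\sigma)$ and the SSNMC terms $-\lambda+s(X,X)-n\phi(H)+\tg(P^\perp,\sigma(X,X))$ pass through unchanged. The equality discussion transfers verbatim, because Theorem~\ref{thm:csf} already reduces it to conditions (i)--(ii) of Theorem~\ref{thm:main}, which involve only the quadratic sum.

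First I would use the definition of the Wirtinger angle. For a unit $X$ orthogonal to $\ker T$, the angle between $JX$ and $T_xM^n$ is the angle between $JX$ and its tangential projection $TX$. Since $\|JX\|=\|X\|=1$, this gives $\|TX\|=\|JX\|\cos\vartheta=\cos\vartheta$, i.e.\ $\|TX\|^2=\cos^2\vartheta$. For a general unit $X$, split $X=X_0+X_1$ with $X_0\in\ker T$ and $X_1\perp\ker T$. Then $TX=TX_1$.

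The next step uses the standard slant identity $T^2=-\cos^2\vartheta\,\Id$ on $(\ker T)^\perp$. Together with the skew-symmetry of $T$, this gives $\|TX\|^2=\cos^2\vartheta\,\|X_1\|^2$. In the proper slant case (the usual convention, where $\ker T=0$ unless $\vartheta=\pi/2$) this is just $\|TX\|^2=\cos^2\vartheta$.

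Substituting $\|TX\|^2=\cos^2\vartheta$ into \eqref{eq:csf-ineq} turns $3c\|TX\|^2$ into $3c\cos^2\vartheta$, which gives \eqref{eq:csf-slant}. The extreme cases are then immediate. For $\vartheta=0$, $JX$ is tangent, so $\|TX\|^2=1$ and we recover Corollary~\ref{cor:csf-inv}. For $\vartheta=\pi/2$, $TX=0$ and we recover Corollary~\ref{cor:csf-anti}.

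The main obstacle is the role of $\ker T$ and the sign of $c$. If $\ker T\neq0$ and $X$ has a component in it, then $\|TX\|^2=\cos^2\vartheta\|X_1\|^2\le\cos^2\vartheta$. In that case replacing $\|TX\|^2$ by $\cos^2\vartheta$ preserves the lower bound only when $c\le0$, and it destroys the equality statement. I would handle this by restricting to proper slant submanifolds ($\ker T=0$ for $\vartheta<\pi/2$), where $\|TX\|^2=\cos^2\vartheta$ holds exactly for every unit $X$. The derivation of $\|TX\|^2=\cos^2\vartheta$ from $T^2=-\cos^2\vartheta\Id$, via $\tg(TX,TX)=-\tg(T^2X,X)$, is the one computation I would write out in full.
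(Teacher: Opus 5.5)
Your proposal follows the paper's proof: you read off $\|TX\|^2=\cos^2\vartheta$ from the Wirtinger angle, substitute it into Theorem~\ref{thm:csf}, and obtain the extreme cases at $\vartheta=0,\tfrac\pi2$. Your $\ker T$ caveat is correct and more careful than the paper, whose one-line proof asserts $\|TX\|^2=\cos^2\vartheta$ for \emph{every} unit $X$ and so tacitly assumes the angle condition on all of $T_xM^n$ (Chen's definition, i.e.\ your restriction $\ker T=0$ when $\vartheta<\tfrac\pi2$). Under the definition as literally stated, the substitution genuinely fails for $c>0$: for example, take $X=\xi=-JN$ on a small geodesic sphere in $\mathbb{C}P^2$, which is $0$-slant in that sense but has $\|T\xi\|^2=0$ and Hineva equality in the $\xi$-direction.
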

\begin{proof}
For a $\vartheta$-slant submanifold, $TX$ and $JX$ make the constant angle $\vartheta$, so $\|TX\|^2=\|JX\|^2\cos^2\vartheta=\cos^2\vartheta$ for every unit $X$. Substituting $\|TX\|^2=\cos^2\vartheta$ into \eqref{eq:csf-ineq} gives \eqref{eq:csf-slant}. At $\vartheta=0$ one has $\|TX\|^2=1$ (invariant case) and at $\vartheta=\tfrac\pi2$ one has $\|TX\|^2=0$ (anti-invariant case), recovering \eqref{eq:csf-inv} and \eqref{eq:csf-anti}.
\end{proof}

\begin{remark}\label{rem:csf-hist}
The Chen--Ricci inequality for complex space forms with SSNMC was obtained in \cite{Moh1}; Theorem \ref{thm:csf} is its Hineva-type counterpart.
\end{remark}

\begin{example}[slant direction, strict, radical active]\label{ex:complex}
Take $\tN=\mathbb{C}^2=\mathbb{R}^4$ $(c=0)$, $J\partial_{x_k}=\partial_{y_k}$, and the real hypersurface $M=S^1(1)\times\mathbb{R}^2$, $r=(\cos\theta,\sin\theta,z_1,z_2)$, $n=3$. Here $(\sigma^N_{ij})=\operatorname{diag}(-1,0,0)$, $\|H\|^2=\tfrac19$, $\|\sigma\|^2=1$, $\HH(H,\sigma)=-\tfrac49$ (radical active). Since $JE_{z_1}=E_{z_2}$ and $JE_\theta=-N$, $M$ is a CR-hypersurface. Take $\phi=dx_1$, $P=\partial_{x_1}$, and the slant direction $X=\cos\alpha\,E_\theta+\sin\alpha\,E_{z_1}$; then $\|TX\|^2=\sin^2\alpha$. One computes $\lambda=-\sin^2\theta$, $s(X,X)=-\cos^2\alpha\sin^2\theta$, $\phi(H)=-\tfrac13\cos\theta$, $\tg(P^\perp,\sigma(X,X))=-\cos^2\alpha\cos\theta$, $\Ric(X)=\sin^2\alpha(\sin^2\theta+\cos\theta)$. The right side of \eqref{eq:csf-ineq} equals $\Ric(X)-\tfrac49$: strict, with constant slack $\tfrac49$.
\end{example}

\subsection{Sasakian space forms}

Let $\tN^{\,2m+1}(c)$ be a Sasakian space form of constant $\varphi$-sectional curvature $c$, with almost contact metric structure $(\varphi,\xi,\eta,\tg)$ (we write $\varphi$ for the structure tensor to avoid a clash with the connection $1$-form $\phi$). Assume $M^n$ is tangent to the Reeb field $\xi$, and let $X\in T^1_xM^n$ be a unit vector orthogonal to $\xi$. Decompose $\varphi X=TX+FX$ into tangential and normal parts, and set $\|TX\|^2=\sum_{k=1}^{n}\tg(\varphi X,e_k)^2$.

\begin{theorem}[Hineva inequality for Sasakian space forms]\label{thm:sasakian}
Let $M^n$ $(n\ge2)$ be a submanifold, tangent to the Reeb field $\xi$, of a Sasakian space form $\tN^{\,2m+1}(c)$ endowed with a semi-symmetric non-metric connection $\tnabla$. Then, for every unit $X\in T^1_xM^n$ orthogonal to $\xi$ and with the convention \eqref{eq:convention},
\begin{equation}\label{eq:sas-ineq}
\Ric(X)\ge(n-1)\frac{c+3}{4}+\frac{c-1}{4}\bigl(3\|TX\|^2-1\bigr)-\lambda+s(X,X)-n\,\phi(H)+\tg\bigl(P^\perp,\sigma(X,X)\bigr)+\HH(H,\sigma),
\end{equation}
with equality if and only if conditions (i)--(ii) of Theorem \ref{thm:main} hold (see also Remark \ref{rem:n2main} for $n=2$).
\end{theorem}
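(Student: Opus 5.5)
As in Theorem \ref{thm:csf}, the plan is to compute $\Theta(X)$ for the Sasakian space form and then substitute it into Theorem \ref{thm:main}. The equality analysis in Theorem \ref{thm:main} depends only on the quadratic sum in \eqref{eq:ricci-id}, which does not involve the ambient curvature. It therefore carries over verbatim, with Remark \ref{rem:n2main} covering the case $n=2$. The only genuine work is the evaluation of $\Theta(X)$.

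We use the standard Levi-Civita curvature of a Sasakian space form, written in the sign convention of the complex case so that $\ctR(X,Y,Y,X)$ is the sectional curvature:
\begin{align*}
\ctR(X,Y,Z,W)&=\tfrac{c+3}{4}\bigl[\tg(X,W)\tg(Y,Z)-\tg(X,Z)\tg(Y,W)\bigr]\\
&\quad+\tfrac{c-1}{4}\bigl[\eta(X)\eta(Z)\tg(Y,W)-\eta(Y)\eta(Z)\tg(X,W)+\eta(Y)\eta(W)\tg(X,Z)-\eta(X)\eta(W)\tg(Y,Z)\\
&\qquad+\tg(\varphi X,W)\tg(\varphi Y,Z)-\tg(\varphi X,Z)\tg(\varphi Y,W)-2\tg(\varphi X,Y)\tg(\varphi Z,W)\bigr].
\end{align*}
Take $e_1=X\perp\xi$. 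Since $\xi$ is tangent to $M^n$, we may choose the orthonormal frame with $e_n=\xi$.

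Set $X=W=e_1$ and $Y=Z=e_j$ with $j\ge2$. The $\eta(X)$ terms vanish because $\eta(e_1)=0$, and orthonormality kills the remaining cross terms. This leaves
\[
\ctR(e_1,e_j,e_j,e_1)=\tfrac{c+3}{4}+\tfrac{c-1}{4}\bigl(3\tg(\varphi e_1,e_j)^2-\eta(e_j)^2\bigr).
\]
Summing over $j=2,\dots,n$ requires two facts. First, $\sum_{j\ge2}\eta(e_j)^2=\|\xi\|^2=1$, since $\xi\in T_xM^n$ and $\eta(e_1)=0$. Second, $\tg(\varphi X,e_1)=0$ by skew-symmetry of $\varphi$, so $\sum_{j\ge2}\tg(\varphi X,e_j)^2=\sum_{k=1}^n\tg(\varphi X,e_k)^2=\|TX\|^2$; the term $j=n$ contributes zero anyway, because $\tg(\varphi X,\xi)=-\tg(X,\varphi\xi)=0$. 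Together these give
\[
\Theta(X)=(n-1)\tfrac{c+3}{4}+\tfrac{c-1}{4}\bigl(3\|TX\|^2-1\bigr).
\]
Substituting this into \eqref{eq:mainineq} yields \eqref{eq:sas-ineq}.

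The main point requiring care is the curvature sign convention. We must check that the chosen form of $\ctR$ matches the convention in \eqref{eq:curvrel-sf}, so that $\ctR(X,Y,Y,X)=+c$ on $\varphi$-planes, i.e.\ for $Y=\varphi X$. With our formula, $\tfrac{c+3}{4}+\tfrac{c-1}{4}\cdot3=c$, which confirms it. We must also check that the $\eta$-terms produce exactly $-\tfrac{c-1}{4}$ in total, since mixed planes containing $\xi$ have sectional curvature $1$: indeed $\tfrac{c+3}{4}-\tfrac{c-1}{4}=1$. Finally, the reduction $\sum_{j\ge2}\eta(e_j)^2=1$ uses both hypotheses, $\xi$ tangent and $X\perp\xi$, and this is where those assumptions enter.
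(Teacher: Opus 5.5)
Your proposal is correct and follows the paper's proof essentially verbatim. You evaluate $\ctR(e_1,e_j,e_j,e_1)$ from the Sasakian curvature using $\eta(e_1)=0$, sum over $j\ge2$ using $\tg(\varphi X,X)=0$ and $\sum_{j\ge2}\eta(e_j)^2=1$, and substitute the resulting $\Theta(X)$ into Theorem \ref{thm:main}, where the equality analysis carries over unchanged. Your explicit curvature formula and the sign checks on $\varphi$-planes and on planes containing $\xi$ are a useful addition that the paper leaves implicit, and choosing $e_n=\xi$ is harmless but unnecessary, since $\Theta(X)$ does not depend on the orthonormal basis chosen for the complement of $X$ in $T_xM^n$.
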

\begin{proof}
It suffices to evaluate $\Theta(X)$ in Theorem \ref{thm:main}. Setting $e_1=X$ ($X\perp\xi$, unit) and substituting into the ambient Sasakian curvature, orthonormality and $\eta(e_1)=0$ give $\ctR(e_1,e_j,e_j,e_1)=\frac{c+3}{4}+\frac{c-1}{4}\bigl[3\tg(\varphi e_1,e_j)^2-\eta(e_j)^2\bigr]$. Summing over $j=2,\dots,n$ and using $\tg(\varphi X,X)=0$ (skew-symmetry of $\varphi$) and $\sum_{j=2}^{n}\eta(e_j)^2=1$ (since $\xi\in M$ and $\eta(X)=0$) yields
\begin{equation}\label{eq:sas-theta}
\Theta(X)=(n-1)\frac{c+3}{4}+\frac{c-1}{4}\bigl(3\|TX\|^2-1\bigr).
\end{equation}
Substituting \eqref{eq:sas-theta} into Theorem \ref{thm:main} gives \eqref{eq:sas-ineq}.
\end{proof}

\begin{corollary}[invariant submanifolds]\label{cor:sas-inv}
If $M^n$ is invariant ($\varphi X$ tangent for $X\perp\xi$), then $\|TX\|^2=1$ and the ambient contribution becomes $\tfrac{c-1}{2}$. Hence
\begin{equation}\label{eq:sas-inv}
\Ric(X)\ge(n-1)\frac{c+3}{4}+\frac{c-1}{2}-\lambda+s(X,X)-n\,\phi(H)+\tg(P^\perp,\sigma(X,X))+\HH(H,\sigma).
\end{equation}
\end{corollary}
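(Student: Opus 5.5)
This corollary should follow directly from Theorem \ref{thm:sasakian}; the only task is to evaluate $\|TX\|^2$ for an invariant submanifold and substitute it into \eqref{eq:sas-ineq}. Fix a unit vector $X\in T^1_xM^n$ orthogonal to $\xi$. By invariance, $\varphi X$ is tangent to $M^n$. Hence the normal part vanishes, $FX=0$, and $TX=\varphi X$.

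The first step is to compute the norm of $\varphi X$. For a Sasakian (indeed any almost contact metric) structure we have
\[
\tg(\varphi X,\varphi X)=\tg(X,X)-\eta(X)^2 .
\]
With $\eta(X)=\tg(X,\xi)=0$ and $\|X\|=1$ this gives $\|\varphi X\|^2=1$. Since $\varphi X\in T_xM^n$, its squared norm equals the sum of squares of its components in the orthonormal frame $\{e_1,\dots,e_n\}$. Therefore
\[
\|TX\|^2=\sum_{k=1}^n\tg(\varphi X,e_k)^2=\|\varphi X\|^2=1 .
\]

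Next I substitute this value into the curvature term of \eqref{eq:sas-ineq}:
\[
\frac{c-1}{4}\bigl(3\cdot 1-1\bigr)=\frac{c-1}{2}.
\]
All remaining terms pass through unchanged, which yields \eqref{eq:sas-inv}. The equality characterization is inherited verbatim from Theorem \ref{thm:sasakian}, and hence from Theorem \ref{thm:main}, because it depends only on the quadratic sum in $\sigma$.

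There is no genuine obstacle here. The only point needing care is the norm identity $\|\varphi X\|^2=1$, which relies on $X\perp\xi$. That hypothesis is exactly what Theorem \ref{thm:sasakian} already assumes, so it is available.
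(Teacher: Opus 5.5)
Your proposal is correct and matches the paper's proof: invariance gives $FX=0$ and $TX=\varphi X$, the almost contact metric identity $\tg(\varphi X,\varphi X)=\tg(X,X)-\eta(X)^2$ together with $\eta(X)=0$ yields $\|TX\|^2=1$, and substituting into \eqref{eq:sas-ineq} gives $\tfrac{c-1}{4}(3-1)=\tfrac{c-1}{2}$. Your remark that $\sum_k\tg(\varphi X,e_k)^2=\|\varphi X\|^2$ because $\varphi X$ is tangent spells out a step the paper leaves implicit.
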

\begin{proof}
$FX=0$ and $TX=\varphi X$, so $\|TX\|^2=\tg(\varphi X,\varphi X)=\tg(X,X)-\eta(X)^2=1$. Substituting into \eqref{eq:sas-ineq} gives $\tfrac{c-1}{4}(3-1)=\tfrac{c-1}{2}$.
\end{proof}

\begin{corollary}[anti-invariant submanifolds]\label{cor:sas-anti}
If $M^n$ is anti-invariant ($\varphi X$ normal for $X\perp\xi$), then $\|TX\|^2=0$ and the ambient contribution becomes $-\tfrac{c-1}{4}$. Hence
\begin{equation}\label{eq:sas-anti}
\Ric(X)\ge(n-1)\frac{c+3}{4}-\frac{c-1}{4}-\lambda+s(X,X)-n\,\phi(H)+\tg(P^\perp,\sigma(X,X))+\HH(H,\sigma).
\end{equation}
\end{corollary}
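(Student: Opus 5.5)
The plan is to mirror the proof of Corollary \ref{cor:sas-inv} and specialise Theorem \ref{thm:sasakian} directly. For anti-invariant $M^n$ (tangent to $\xi$), the hypothesis says that $\varphi X\in T^\perp_xM^n$ for every $X\perp\xi$. I will first show that $TX=0$. Since $FX=\varphi X$, for each vector $e_k$ of the orthonormal frame $\{e_1,\dots,e_n\}$ of $T_xM^n$ we have $\tg(\varphi X,e_k)=0$. The sum $\|TX\|^2=\sum_{k=1}^{n}\tg(\varphi X,e_k)^2$ therefore vanishes term by term. One point needs care. The frame may contain $\xi$ itself, and the hypothesis only concerns $X\perp\xi$; but the pairing is $\tg(\varphi X,\xi)=-\tg(X,\varphi\xi)=0$ because $\varphi\xi=0$. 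So no case split on whether $e_k$ is $\xi$ is actually needed.

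Next I substitute $\|TX\|^2=0$ into \eqref{eq:sas-theta}. This gives
\[
\Theta(X)=(n-1)\frac{c+3}{4}+\frac{c-1}{4}(0-1)=(n-1)\frac{c+3}{4}-\frac{c-1}{4},
\]
which is exactly the claimed ambient contribution $-\tfrac{c-1}{4}$ on top of the constant part. Inserting this value into \eqref{eq:sas-ineq} yields \eqref{eq:sas-anti}. All the SSNMC terms $-\lambda+s(X,X)-n\,\phi(H)+\tg(P^\perp,\sigma(X,X))$ and the extrinsic term $\HH(H,\sigma)$ carry over unchanged.

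The equality characterisation, conditions (i)--(ii) of Theorem \ref{thm:main}, also transfers verbatim. As in the proof of Theorem \ref{thm:csf}, this is because the equality analysis depends only on the quadratic sum in the second fundamental form, which is insensitive to $\Theta(X)$. There is no real obstacle here. The only step deserving a sentence is the vanishing of $\tg(\varphi X,\xi)$, which removes any ambiguity about whether the frame contains $\xi$.
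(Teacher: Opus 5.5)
Your proposal is correct and follows the paper's proof exactly: anti-invariance gives $TX=0$, hence $\|TX\|^2=0$ in \eqref{eq:sas-theta}, and substituting into \eqref{eq:sas-ineq} yields \eqref{eq:sas-anti}. Your extra check that $\tg(\varphi X,\xi)=0$ is correct but not needed, because $X\perp\xi$ already makes $\varphi X$ normal, so it is orthogonal to every tangent vector, including $\xi$.
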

\begin{proof}
$TX=0$, so $\|TX\|^2=0$. Substituting gives $\tfrac{c-1}{4}(0-1)=-\tfrac{c-1}{4}$.
\end{proof}

\begin{corollary}[slant submanifolds]\label{cor:sas-slant}
If $M^n$ is $\vartheta$-slant ($X\perp\xi$), then $\|TX\|^2=\cos^2\vartheta$ and
\begin{equation}\label{eq:sas-slant}
\Ric(X)\ge(n-1)\frac{c+3}{4}+\frac{c-1}{4}(3\cos^2\vartheta-1)-\lambda+s(X,X)-n\,\phi(H)+\tg(P^\perp,\sigma(X,X))+\HH(H,\sigma).
\end{equation}
Corollaries \ref{cor:sas-inv} and \ref{cor:sas-anti} are recovered at $\vartheta=0$ and $\vartheta=\tfrac\pi2$.
\end{corollary}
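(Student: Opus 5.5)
The plan is to reduce Corollary \ref{cor:sas-slant} to Theorem \ref{thm:sasakian}, as Corollaries \ref{cor:sas-inv} and \ref{cor:sas-anti} were. All the slant hypothesis must supply is the value of $\|TX\|^2$ for a unit $X$ orthogonal to $\xi$. Once $\|TX\|^2=\cos^2\vartheta$ is established, substitution into \eqref{eq:sas-ineq} gives \eqref{eq:sas-slant} directly. The equality discussion is inherited unchanged from Theorem \ref{thm:main}, because the ambient term $\Theta(X)$ does not enter the quadratic estimate.

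The key step is computing $\|TX\|^2$. Since $X\perp\xi$ is unit, the Sasakian identity $\tg(\varphi X,\varphi X)=\tg(X,X)-\eta(X)^2$ gives $\|\varphi X\|=1$. The vector $TX$ is the orthogonal projection of $\varphi X$ onto $T_xM^n$, so $\|TX\|=\|\varphi X\|\cos\vartheta(X)$, where $\vartheta(X)$ is the Wirtinger angle between $\varphi X$ and $T_xM^n$. The $\vartheta$-slant condition says this angle equals the constant $\vartheta$ for every nonzero $X\perp\xi$. Hence $\|TX\|^2=\cos^2\vartheta$.

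We should also check that the frame definition $\|TX\|^2=\sum_{k}\tg(\varphi X,e_k)^2$ agrees with the squared norm of the tangential projection. This holds because $\{e_k\}$ is an orthonormal basis of $T_xM^n$.

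The only potential obstacle is that the slant condition is stated for $X\perp\xi$. That is exactly the class of vectors allowed in Theorem \ref{thm:sasakian}, so no case split is needed. If $\vartheta=0$, then $\varphi X$ is tangent, $\|TX\|^2=1$, and we recover \eqref{eq:sas-inv}. If $\vartheta=\tfrac\pi2$, then $\varphi X$ is normal, $\|TX\|^2=0$, and we recover \eqref{eq:sas-anti}. In both cases $\tfrac{c-1}{4}(3\cos^2\vartheta-1)$ reduces to $\tfrac{c-1}{2}$ and $-\tfrac{c-1}{4}$ respectively, which completes the argument.
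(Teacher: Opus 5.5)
Your proposal is correct and follows the paper's proof: the paper likewise reduces to Theorem \ref{thm:sasakian} via the single computation $\|TX\|^2=\|\varphi X\|^2\cos^2\vartheta=(1-\eta(X)^2)\cos^2\vartheta=\cos^2\vartheta$ for unit $X\perp\xi$. Your extra checks are also correct and make explicit what the paper leaves implicit: that the frame sum equals the squared norm of the tangential projection, and the values of $\tfrac{c-1}{4}(3\cos^2\vartheta-1)$ at $\vartheta=0$ and $\vartheta=\tfrac\pi2$.
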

\begin{proof}
$\|TX\|^2=\|\varphi X\|^2\cos^2\vartheta=(1-\eta(X)^2)\cos^2\vartheta=\cos^2\vartheta$.
\end{proof}

\begin{remark}\label{rem:sas-hist}
The Chen--Ricci inequality for $\xi$-tangent submanifolds of trans-Sasakian manifolds with SSNMC was obtained in \cite{Moh2}; Theorem \ref{thm:sasakian} is its Hineva-type counterpart. The same strategy applies to cosymplectic, Kenmotsu and generalised Sasakian space forms \cite{ABC} by evaluating the corresponding ambient partial Ricci trace $\Theta(X)$.
\end{remark}

\section{Examples in real space forms}\label{sec:examples}

We present two explicit examples in a flat real ambient space $(c=0)$, both with $P^\perp\neq0$ so that the term $\tg(P^\perp,\sigma(X,X))$ is active. Example \ref{ex:eq-sphere} realises the equality case of Theorem \ref{thm:rsf}; Example \ref{ex:strict} is strict. A complex-ambient example was given in Example \ref{ex:complex}.

\begin{example}[equality]\label{ex:eq-sphere}
Let $\tN=\mathbb{R}^3$ with $\phi=dz$, so $P=\partial_z$ and $s=-\phi\otimes\phi$. Let $M=S^2$ be the unit sphere, with $X=e_1=\partial_\theta$ and $e_2=\partial_\varphi/\sin\theta$. Then $(\sigma^N_{ij})=\operatorname{diag}(-1,-1)$, $\|H\|^2=1$, $\|\sigma\|^2=2$, $P^\perp=\cos\theta\,N$, $\phi(H)=-\cos\theta$, $\tg(P^\perp,\sigma(X,X))=-\cos\theta$, $s(X,X)=\lambda=-\sin^2\theta$. A direct computation gives $\Ric(X)=1+\cos\theta$. With $n=2$, $\HH(H,\sigma)=1$, and the right side of \eqref{eq:rsf-ineq} equals
\[
0-(-\sin^2\theta)+(-\sin^2\theta)-2(-\cos\theta)+(-\cos\theta)+1=1+\cos\theta=\Ric(X).
\]
Equality holds at every point. Since $n=2$, however, the Hineva estimate is an identity (Remark \ref{rem:n2lemma}) and equality is automatic at the algebraic level; this example tests the SSNMC terms but not the nontrivial $n\ge3$ mechanism.
\end{example}

\begin{example}[equality, $n=3$, $p=2$: nontrivial mechanism]\label{ex:n3}
To exhibit the genuinely nontrivial equality mechanism for $n\ge3$, we build an abstract example with $n=3$ and $p=2$ normal directions (without SSNMC, i.e.\ $\phi\equiv0$, so that only the algebraic mechanism is tested). Fix $\rho=4/5$ and take shape operators satisfying conditions (i)--(ii) of Theorem \ref{thm:main}:
\[
\sigma^1=\operatorname{diag}\!\left(\tfrac{14}{15},\tfrac{38}{15},\tfrac{38}{15}\right),\qquad
\sigma^2=\operatorname{diag}\!\left(\tfrac{7}{15},\tfrac{19}{15},\tfrac{19}{15}\right).
\]
Then $H^1=2$, $H^2=1$, and $D_1=|\lambda_1-\mu_1|=8/5$, $D_2=4/5$, so $D_\alpha=\rho|H^\alpha|$ with $\rho=4/5$ for both $\alpha$ (condition (ii)). Each $\sigma^\alpha$ has the quasi-umbilical form with the distinguished eigenvalue on the minimising branch \eqref{eq:eigenvals3} ($\varepsilon_\alpha=+1$ since $H^\alpha>0$), satisfying (i). A direct computation confirms that the quadratic sum equals $\HH(H,\sigma)$ exactly: both sides give $266/45$. This verifies the equality characterization for $n\ge3$ with genuinely distinct normal components and a nontrivial proportionality constant.
\end{example}

\begin{example}[strict, radical active]\label{ex:strict}
Let $\tN=\mathbb{R}^4$ with $\phi=dx_1$, so $P=\partial_{x_1}$ and $s=-\phi\otimes\phi$. Let $M=S^1(1)\times\mathbb{R}^2$, $r=(\cos\theta,\sin\theta,z_1,z_2)$, $X=\partial_\theta$, $n=3$. The shape operator is $\operatorname{diag}(-1,0,0)$, $\|H\|^2=\tfrac19$, $\|\sigma\|^2=1$, $\HH(H,\sigma)=-\tfrac49$ (radical active). Here $s(X,X)=\lambda=-\sin^2\theta$, $\phi(H)=-\tfrac13\cos\theta$, $\tg(P^\perp,\sigma(X,X))=-\cos\theta$, $\Ric(X)=0$. The right side of \eqref{eq:rsf-ineq} equals $-\tfrac49$, so $0\ge-\tfrac49$: strict, with all SSNMC terms contributing.
\end{example}

\end{document}